\newcommand{\bnu}{\ensuremath{\boldsymbol{\nu}}}
\newcommand{\bxi}{\ensuremath{\boldsymbol{\xi}}}
\newcommand{\bomega}{\ensuremath{\boldsymbol{\omega}}}
\newcommand{\bc}{\ensuremath{\bm{c}}}
\newcommand{\be}{\ensuremath{\bm{e}}}
\newcommand{\bof}{\ensuremath{\bm{f}}}
\newcommand{\bn}{\ensuremath{\bm{n}}}
\newcommand{\bx}{\ensuremath{\bm{x}}}
\newcommand{\bom}{\ensuremath{\bm{m}}}
\newcommand{\bw}{\ensuremath{\bm{w}}}
\newcommand{\by}{\ensuremath{\bm{y}}}
\newcommand{\bz}{\ensuremath{\bm{z}}}
\newcommand{\bB}{\ensuremath{\bm{B}}}
\newcommand{\bI}{\ensuremath{\bm{I}}}
\newcommand{\bO}{\ensuremath{\bm{0}}}
\newcommand{\bX}{\ensuremath{\bm{X}}}
\newcommand{\bM}{\ensuremath{\bm{M}}}
\theoremstyle{plain}
\newtheorem{theorem}{Theorem}[section]
\newtheorem{lemma}[theorem]{Lemma}
\newtheorem{proposition}[theorem]{Proposition}
\newtheorem{remark}[theorem]{Remark}
\begin{document}
\author{D.~P.~Bourne\footnotemark[2] \and S.~M.~Roper\footnotemark[2]}
\title{Centroidal power diagrams, Lloyd's algorithm and applications to optimal location problems}

\renewcommand{\thefootnote}{\fnsymbol{footnote}}
\footnotetext[2]{School of Mathematics and Statistics, University of Glasgow, University Gardens, Glasgow, G12 8QW}
\renewcommand{\thefootnote}{\fnsymbol{arabic}}

\maketitle

\begin{abstract}
In this paper we develop a numerical method for solving a class of optimization problems known as \emph{optimal location} or \emph{quantization} problems.
The target energy can be written either in terms of atomic measures and the Wasserstein distance or in terms of weighted points and power diagrams (generalized Voronoi diagrams). The latter formulation is more suitable for computation. We show that critical points of the energy are \emph{centroidal power diagrams}, which are generalizations of centroidal Voronoi tessellations, and that they can be approximated by a generalization of Lloyd's algorithm (Lloyd's algorithm is a common method for finding centroidal Voronoi tessellations). We prove that the algorithm is energy decreasing and prove a convergence theorem. Numerical experiments suggest that the algorithm converges linearly. We illustrate the algorithm in two and three dimensions using simple models of optimal location and crystallization.
 In particular, we test a conjecture about the optimality of the BCC lattice for a simplified model of block copolymers.

\end{abstract}
%

\section{Introduction}
\label{sec:intro}
In this paper we derive and analyze a numerical method for minimizing a class energies that arise in economics (optimal location problems), electrical engineering (quantization), and materials science (crystallization and pattern formation). Applications are discussed further in \S\ref{sec:App}. These energies can be formulated either in terms of atomic measures and the Wasserstein distance, equation \eqref{eqn:energy_fundamental}, or in terms of generalized Voronoi diagrams, equation \eqref{eqn:energy}. These formulations are equivalent, but \eqref{eqn:energy_fundamental} is more common in the applied analysis literature (e.g., \cite{BouchitteEtAl}, \cite{ButtazzoSantambrogio}) and \eqref{eqn:energy} is more common in the computational geometry and quantization literature (e.g, \cite{Du1999}, \cite{GershoGray}). Importantly for us, formulation \eqref{eqn:energy} is much more convenient for numerical work. We work with formulation \eqref{eqn:energy} throughout the paper after first deriving it from \eqref{eqn:energy_fundamental} in \S\ref{sec:1.1} and \S\ref{sec:1.2}. We start from \eqref{eqn:energy_fundamental} rather than directly from \eqref{eqn:energy} in order to highlight the connection between the different communities.

\subsection{Wasserstein formulation of the energy}
\label{sec:1.1}
Let $\Omega$ be a bounded subset of $\mathbb{R}^d$, $d \ge 2$, and $\rho: \Omega \to [0,\infty)$ be a given density on $\Omega$. Let $f:[0,\infty)\to \mathbb{R}$. We consider the following class of discrete energies, which are defined on sets of weighted points $\{\bx_i,m_i \}_{i=1}^N \in (\Omega \times (0,\infty))^N$, $\bx_i \ne \bx_j$ if $i \ne j$:
\begin{equation}
\label{eqn:energy_fundamental}
F\left(\{\bm{x}_i,m_i\}\right)=\sum_{i=1}^N f(m_i)+d^2\left(\rho,\sum_{i=1}^N m_i\delta_{x_i}\right).
\end{equation}
The second term is the square of the Wasserstein distance between the density $\rho$ and the atomic measure $\sum_{i=1}^N m_i\delta_{x_i}$. It is defined below in equation \eqref{eq:Wass}.
This energy models, e.g., the problem of optimally locating resources (such as recycling points, polling stations, or distribution centres) in a city or country $\Omega$ with population density $\rho$. The points $\bx_i$ are the locations of the resources and the weights $m_i$ represent their size.
The first term of the energy penalizes the cost of building or running the resources. The second term penalizes the total distance between the population and the resources.
In our case the Wasserstein distance $d(\cdot,\cdot)$ can be defined by
\begin{multline}
\label{eq:Wass}
d^2\left(\rho,\sum_{i=1}^Nm_i\delta_{x_i}\right) = \\
\min_{T:\Omega \to \{ \bx_i \}_{i=1}^N} \left\{ \sum_{i=1}^N \int_{T^{-1}(\bx_i)} |\bx - \bx_i|^2 \rho(\bx) \, d \bx : \int_{T^{-1}(\bx_i)} \rho \, d \bx = m_i \; \forall \; i \right\}.
\end{multline}
See, e.g., \cite{Villani}.
In two dimensions the minimization problem \eqref{eq:Wass} can be interpreted as the following optimal partitioning problem: The map $T$ partitions, e.g., a city $\Omega$ with population density $\rho$ into $N$ regions, $\{ T^{-1}(\bx_i) \}_{i=1}^N$. Region $T^{-1}(\bx_i)$ is assigned to the resource (e.g., polling station) located at point $\bx_i$ of size $m_i$. The optimal map $T$ does this in such a way to minimize the total distance squared between the population and the resources subject to the constraint that each resource can meet the demand of the population assigned to it.

The Wasserstein distance is well-defined provided that the weights $m_i$ are positive and satisfy the mass constraint
\begin{equation}
\label{eq:con}
\sum_i m_i = \int_{\Omega} \rho(\bx) \, d \bx.
\end{equation}
It can be shown that $d(\cdot,\cdot)$ is a metric on measures and that it metrizes weak convergence of measures, meaning that if $\rho_n$ converges to $\rho$, then $d(\rho,\rho_n) \to 0$. See, e.g., \cite[Ch.~7]{Villani}.
It is not necessary to be familiar with measure theory or the Wasserstein distance since we will soon reformulate the minimization problem $\min F$ as a more elementary computational geometry problem involving generalized Voronoi diagrams (power diagrams).

The given data for the problem are $\Omega$, $f$, $\rho$. We assume that $f$ is twice differentiable and
\begin{equation}
\label{eq:assump}
\Omega \textrm{ is convex}, \quad f'' \le 0, \quad f(0) \ge 0, \quad \rho \in C^0(\Omega), \quad \rho \ge 0.
\end{equation}
We also exclude linear functions $f(m)=a m$, $a \in \mathbb{R}$, since otherwise the first term of the energy is a constant,
 $\sum_i f(m_i) = a \sum_i m_i = \int_\Omega \rho \, d \bx$, and $F$ has no minimizer (see below). However, affine functions $f(m)=am + b$, $b > 0$, are admissible. The necessity and limitations of assumptions \eqref{eq:assump} are discussed in \S \ref{sec:Lim}.

The number $N$ of weighted points is \emph{not} prescribed and is an unknown of the problem: The goal is to minimize $F$ over sets of weighted points  $\{\bx_i,m_i \}_{i=1}^N$, subject to the constraint \eqref{eq:con}, and over $N$. The optimal value of $N$ is determined by the competition between the two terms of $F$. Amongst finite $N$, the first term is minimized when $N=1$, due to the concavity of $f$.
The infimum of the second term is zero, which is obtained in the limit $N \to \infty$ (this is because the measure $\rho \, d \bx$ can be approximated arbitrarily well with dirac masses, e.g., by using a convergent quadrature rule, and because the Wasserstein distance $d(\cdot,\cdot)$ metrizes weak convergence of measures).

Energies of the form of $F$ and generalizations have received a great deal of attention in the applied analysis literature, e.g., \cite{ButtazzoSantambrogio} and \cite{BouchitteEtAl} study the existence and properties of minimizers for broad classes of optimal location energies.
 There is far less work, however, on numerical methods for such problems. Exceptions include the case of \eqref{eqn:energy_fundamental} with $f=0$, which has been well-studied numerically. This is discussed in \S \ref{sec:CVTs}.

\subsection{Power diagram formulation of the energy}
\label{sec:1.2}
Minimizing $F$ numerically is challenging due to presence of the Wasserstein term, which is defined implicitly in terms of the solution to the optimal transportation problem \eqref{eq:Wass}. This is an infinite-dimensional linear programming problem in which every point in $\Omega$ has to be assigned to one of the $N$ weighted points $(\bx_i,m_i)$. Therefore even evaluating the energy $F$ is expensive. One option is to discretize $\rho$ so that \eqref{eq:Wass} becomes a finite-dimensional linear programming problem. This is still costly, however, and it turns out that by exploiting a deep connection between optimal transportation theory and computational geometry we can reformulate the minimization problem $\min F$ in such a way that we can avoid solving \eqref{eq:Wass} altogether.

First we need to introduce some terminology from computational geometry.
The \emph{power diagram} associated to a set of weighted points
$\{ \bx_i , w_i \}_{i=1}^N$, where $\bx_i \in \Omega$, $w_i \in \mathbb{R}$, is the collection of subsets $P_i \subseteq \Omega$ defined by
\begin{equation}
\label{eq:pd}
P_i=\{\bm{x}\in\Omega : \left|\bm{x}-\bm{x}_i\right|^2-w_i\le\left|\bm{x}-\bm{x}_k\right|^2-w_k \; \forall \; k\}.
\end{equation}
The individual sets $P_i$ are called \emph{power cells} (or cells) of the power diagram.
The power diagram is sometimes called the Laguerre diagram, or the radical Voronoi diagram.
If all the weights $w_i$ are equal we obtain the standard Voronoi diagram, see Figure \ref{fig:vorpow}. From equation \eqref{eq:pd} we see that the power cells $P_i$ are obtained by intersecting half planes and are therefore convex polytopes (or the intersection of convex polytopes with $\Omega$ in the case of cells that touch $\partial \Omega$): in dimension $d=3$ the cells are convex polyhedra, in dimension $d=2$ the cells are convex polygons. Note that some of the cells may be empty. The classical reference on generalized Voronoi diagrams is \cite{Okabe2000}.
\begin{figure}
\includegraphics[width=0.5\textwidth]{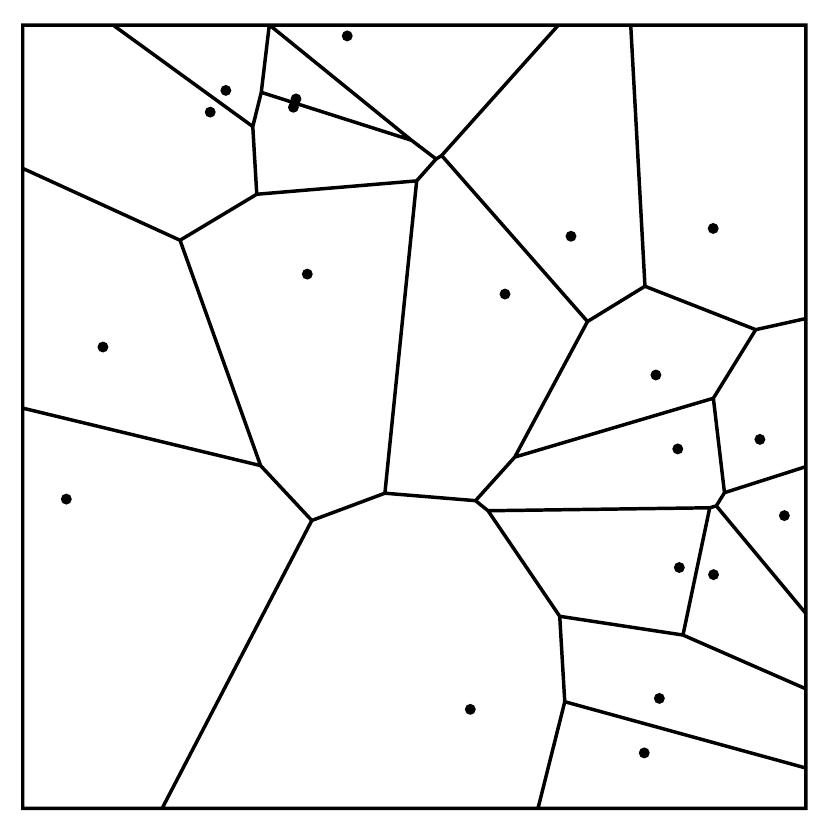}
\includegraphics[width=0.5\textwidth]{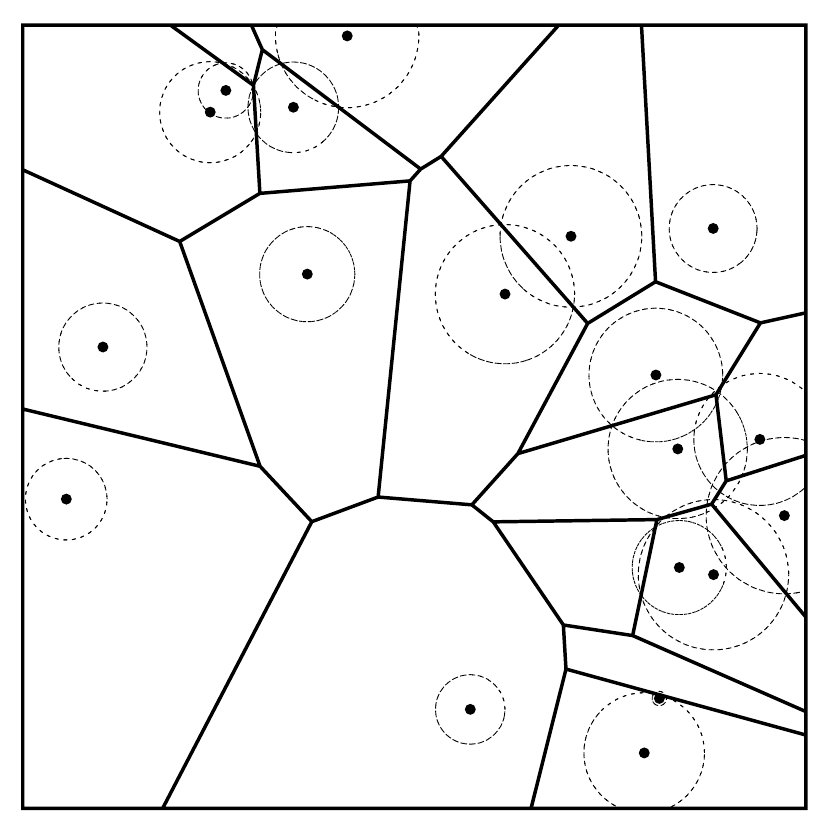}
\caption{\label{fig:vorpow}A comparison of a standard Voronoi diagram (left) with a power diagram (right). The location of the generators in both cases is the same, but the power diagram carries additional structure via the weights associated with each generator. The size of the weights in the power diagram is indicated by the radii of the dashed circles. Notice that in the power diagram it is possible for the generator to lie outside the cell or for the cell associated with a generator to be empty (the Voronoi diagram has 20 cells and the power diagram has 19 cells). The geometrical construction of the power diagram in terms of the generator locations and the circles is simple; for each point $\bm{x}$ construct a tangent line from $\bm{x}$ to the circles centred at $\bm{x}_i$ with radii $r_i$, the length of the tangent line is called the \emph{power} of the point $\bm{x}$, the point $\bm{x}$ belongs to the power cell that has minimum power. The weights of the generators in this case are $w_i=-r_i^2$.}
\end{figure}

Given weighted points $\{ \bx_i, m_i \}_{i=1}^N \in (\Omega \times (0,\infty))^N$, let $T_*$ be the minimizer in \eqref{eq:Wass}.
The optimal transport regions $\{ T_*^{-1}(\bx_i) \}_{i=1}^N$ form a power diagram:
There exits $\{w_i\}_{i=1}^N \in \mathbb{R}^N$
such that the power diagram $\{P_i \}_{i=1}^N$ generated by $\{ \bx_i, w_i \}_{i=1}^N$ satisfies $P_i = T_*^{-1}(\bx_i)$ for all $i$
(up to sets of $\rho \, d \bx$--measure zero). Conversely, if
$\{ P_i \}_{i=1}^N$ is any power diagram with generators $\{ \bx_i, w_i \}_{i=1}^N$, then
\begin{equation}
\label{eq:WassPower}
d^2\left(\rho,\sum_{i=1}^N m_i\delta_{x_i}\right) = \sum_{i=1}^N \int_{P_i} |\bx - \bx_i |^2 \rho \, d \bx \quad \textrm{where} \quad
m_i = \int_{P_i} \rho(\bx) \, d \bx.
\end{equation}
These results can be shown using Brenier's Theorem \cite[Thm.~2.12]{Villani} or the Kantorovich Duality Theorem \cite[Thm.~1.3]{Villani}.
See \cite[Thm.~1 \& 2]{Merigot} or \cite[Prop.~4.4]{BournePeletierRoper}. As far as we are aware these results first appeared in \cite{Aurenhammer98}, although not stated in the language of Wasserstein distances.

Equation \eqref{eq:WassPower} gives an explicit formula for the Wasserstein distance, without the need to solve a linear programming problem, provided that the weights $m_i$ can be written as $\int_{P_i} \rho(\bx) \, d \bx$ for some power diagram $\{ P_i \}$ (with generating points $\bx_i$). In practice actually finding this power diagram involves solving another linear programming problem
(the generating weights $w_i$ come from the solution to the dual linear programming problem to \eqref{eq:Wass}, see \cite[Prop.~4.4]{BournePeletierRoper}), but in our case this can be avoided since we are interested in minimizing $F$ rather than evaluating it at any given point.

We use this connection between the Wasserstein distance and power diagrams to rewrite the energy $F$ in new variables, changing variables from $\{ \bx_i, m_i \}_{i=1}^N \in (\Omega \times (0,\infty))^N$ to $\{ \bx_i, w_i \}_{i=1}^N \in (\Omega \times \mathbb{R})^N$.
By the results above, minimizing $F$ is equivalent to minimizing
\begin{equation}
\label{eqn:energy}
\boxed{ E \left( \{\bx_i,w_i\} \right) = \sum_{i=1}^N \left\{ f(m_i)+\int_{P_i} |\bx-\bx_i|^2 \rho(\bx) \,d \bx \right\} }
\end{equation}
where $\{P_i \}$ is the power diagram generated by $\{\bx_i,w_i\}$ and $m_i := \int_{P_i} \rho \, d \bx$.
The equivalence of $E$ and $F$ is in the following sense:
Given $\{ \bx_i, w_i \}_{i=1}^N \in (\Omega \times \mathbb{R})^N$ and the corresponding power diagram $\{ P_i \}_{i=1}^N$, equation \eqref{eq:WassPower} implies that
\[
E \left( \{\bx_i,w_i\} \right) = F \left( \left\{\bx_i, m_i \right\} \right)
\quad \textrm{for} \quad
m_i = \int_{P_i} \rho(\bx) \, d \bx.
\]
Conversely, it can be shown (e.g., \cite[Prop.~4.4]{BournePeletierRoper}) that given any $\{ \bx_i, m_i \}_{i=1}^N \in (\Omega \times (0,\infty))^N$, there exists
 $\{ w_i \}_{i=1}^N \in \mathbb{R}^N$
 such that the power diagram $\{ P_i \}_{i=1}^N$ generated by $\{ \bx_i, w_i \}_{i=1}^N$ satisfies $\int_{P_i} \rho(\bx) \, d \bx = m_i$ for all $i$.
 Then it follows from \eqref{eq:WassPower} that
$F \left( \{\bx_i,m_i\} \right) = E \left( \{\bx_i,w_i\} \right)$. The weights $\{ w_i \}_{i=1}^N \in \mathbb{R}^N$ are unique up to the addition
of a constant; it is easy to see from \eqref{eq:pd} that $\{ w_i + c \}_{i=1}^N$ and $\{ w_i \}_{i=1}^N$ generate the same power diagram.

While the energies $E$ and $F$ are equivalent, from a numerical point of view it is far more practical to work with $E$ since it can be easily evaluated, unlike $F$, since computing power diagrams is easy while solving the linear programming problem \eqref{eq:Wass} is not. In the rest of the paper we focus on finding local minimizers of $E$.
\subsection{Centroidal power diagrams and a generalized Lloyd algorithm}
\label{sec:genLloyd}
From now on we will write $(\bX,\bw)=((\bx_1, \ldots, \bx_N),(w_1, \ldots , w_N)) \in \Omega^N \times \mathbb{R}^N$ to denote the generators of a power diagram. In this section we introduce an algorithm for finding critical points of $E=E(\bX,\bw)$.

Let $\mathcal{G}^N \subset \Omega^N \times \mathbb{R}^N$ be the smaller class of generators such that no two generators coincide and there are no empty cells:
\begin{equation}
\label{eqn:G}
\mathcal{G}^N = \{ (\bX,\bw) \in \Omega^N \times \mathbb{R}^N : (\bx_i,w_i)\ne(\bx_j,w_j) \textrm{ if } i \ne j, \, P_i \ne \emptyset \;
\forall \; i \}.
\end{equation}
Define $\bxi : \mathcal{G}^N \to \Omega^N$ and $\bomega : \mathcal{G}^N \to \mathbb{R}^N$ by
\begin{equation}
\nonumber
\label{eqn:lloydmap}
\bxi(\bX,\bw) := (\bxi_1(\bX,\bw),\ldots,\bxi_N(\bX,\bw)), \quad
\bomega(\bX,\bw) := (\omega_1(\bX,\bw),\ldots,\omega_N(\bX,\bw)),
\end{equation}
where
\begin{equation}
\bxi_i(\bX,\bw)  := \frac{1}{m_i(\bX,\bw)} \int_{P_i(\bX,\bw)} \bx \rho (\bx) \, d \bx, \quad 
\omega_i(\bX,\bw)  := -f'(m_i(\bX,\bw)).
\end{equation}
Here $P_i(\bX,\bw)$ is the $i$-th power cell in the power diagram generated by $(\bX,\bw)$ and $m_i(\bX,\bw)$ is its mass:
\[
m_i(\bX,\bw) = \int_{P_i(\bX,\bw)} \rho(\bx) \, d \bx.
\]
Note that $\bxi_i(\bX,\bw)$ is the centroid (or centre of mass) of the $i$-th power cell. We will sometimes denote this by $\overline{\bx}_i$.
In \S \ref{sec:deriv} we show that critical points of $E$ are fixed points of the Lloyd maps:
\[
\nabla E(\bX,\bw) = \bO
\quad \iff \quad
(\bxi(\bX,\bw),\bomega(\bX,\bw)) = (\bX,\bw)
\]
(up to the addition of a constant vector to $\bw$ -- see Proposition \ref{prop:critE} for a precise statement).
The condition $\bxi(\bX,\bw)=\bX$ means that the power diagram generated by $(\bX,\bw)$ has the property that
$\bx_i$ is the centroid of its power cell $P_i$ for all $i$. We call these special types of power diagrams \emph{centroidal power diagrams}. This is in analogy with centroidal Voronoi tessellations (CVTs), which are special types of Voronoi diagrams with the property that the generators of the Voronoi diagram are the centroids of the Voronoi cells. See \cite{Du1999} for a nice survey of CVTs. Note also that CVTs can be viewed as a special type of centroidal power diagram where all the weights are equal, $w_i=c$ for all $i$, $c \in \mathbb{R}$, since power diagrams with equal weights are just Voronoi diagrams.

The following algorithm is an iterative method for finding fixed points of $(\bxi,\bomega)$, and therefore critical points of $E$:
\begin{algorithm}[H]
\label{algo_lloyd}
\textbf{Initialization:} Choose $N_0 \in \mathbb{N}$ and $(\bX^0,\bw^0) \in \mathcal{G}^{N_0}$. \newline
\textbf{At each iteration:}
\begin{itemize}
\item[\textbf{(1)}] Update the generators: Given $(\bX^k,\bw^k) \in \mathcal{G}^{N_k}$, compute the corresponding power diagram
and define $(\bX^{k+1},\bw^{k+1}) \in \Omega^{N_k} \times \mathbb{R}^{N_k}$ by
\[
\bX^{k+1} = \bxi(\bX^k,\bw^k), \quad \bw^{k+1} = \bomega(\bX^k,\bw^k).
\]
\item[\textbf{(2)}] Remove empty cells: Compute the power diagram $\{P_i^{k+1}\}_{i=1}^{N_k}$ generated by $(\bX^{k+1},\bw^{k+1})$ and let
 \[
 J = \left\{ j \in \{1,\ldots,N_k\} : P^{k+1}_j = \emptyset \right\}.
 \]
 For all $j \in J$, remove $(\bx_j^{k+1},w_j^{k+1})$ from the list of generators. Then replace $N_k$ with $N_{k+1}=N_k - |J|$.
\end{itemize}
\caption{\label{algo:genLloyd}The generalized Lloyd algorithm for finding critical points of $E$}
\end{algorithm}
In particular this algorithm computes centroidal power diagrams, and it is a generalization of Lloyd's algorithm \cite{Lloyd}, which is a popular method for computing centroidal Voronoi tessellations. See \cite{Du1999}.
The classical Lloyd algorithm is recovered from our generalized Lloyd algorithm by simply taking the weights to be constant at each iteration, e.g., $\bw^k = \bO$ for all $k$. Due to this relation, we refer to $\bxi$ and $\bomega$ as generalized Lloyd maps.

Step (2) of the algorithm means that, given $N_0 \in \mathbb{N}$ and $(\bX^0,\bw^0) \in \mathcal{G}^{N_0}$, the algorithm can converge to a fixed point $(\bX,\bw) \in \mathcal{G}^{N}$ with $N < N_0$. This means that the algorithm can partly correct for an incorrect initial guess $N_0$ (recall that we are minimizing $E(\bX,\bw)$ over $(\bX,\bw) \in \mathcal{G}^N$ and over $N$). It is still possible, however, that the algorithm converges to a local minimizer of $E$, possibly with a non-optimal value of $N$. Note also that the algorithm can eliminate generators, but it cannot create them. Therefore it is impossible for the algorithm to find a global minimizer of $E$ if the initial value of $N_0$ is less than the optimal value. We discuss
strategies for finding global as opposed to local minimizers in \S\ref{sec:implement} and \S\ref{sec:illus}.

Algorithm \ref{algo:genLloyd} was introduced for the special case of $d=2$, $\rho=1$, $f(m) = \sqrt{m}$ in \cite[Sec.~4]{BournePeletierRoper}.
In the current paper we extend it to the broader class of energies \eqref{eqn:energy}, analyze it (prove that it is energy decreasing and that it converges, Theorems \ref{thm:energyDecrease}, \ref{thm:conv}), and implement it in both two and three dimensions. In addition, the derivation here, unlike in \cite{BournePeletierRoper}, is accessible to those not familiar with measure theory and optimal transport theory
since we work with formulation \eqref{eqn:energy} rather than \eqref{eqn:energy_fundamental}.

\subsection{The case $f=0$ and $N$ fixed: CVTs and Lloyd's algorithm}
\label{sec:CVTs}
Setting $f=0$ in \eqref{eqn:energy_fundamental} and fixing $N$ gives the energy
\[
F_N\left(\{\bm{x}_i,m_i\}\right)=d^2\left(\rho,\sum_{i=1}^N m_i\delta_{x_i}\right).
\]
It is necessary to fix $N$ since otherwise this has no minimizer; the infimum is zero, which is obtained in the limit $N \to \infty$ by approximating
$\rho$ with dirac masses. It can be shown that minimizing $F_N$ is equivalent to minimizing
\[
E_N(\{\bx_i\}) = \sum_{i=1}^N \int_{V_i} |\bx - \bx_i |^2 \rho(\bx) \, d \bx
\]
where $\{ V_i \}_{i=1}^N$ is the Voronoi diagram generated by $\{ \bx_i \}_{i=1}^N$:
\[
V_i= \{ \bx \in \Omega : | \bx-\bx_i | \le  | \bx-\bx_k | \; \forall \; k \}.
\]
See \cite[Sec.~4.1]{BournePeletierRoper}. Numerical minimization of $E_N$ has been well-studied. A necessary condition for minimality is that
$\{ \bx_i \}_{i=1}^N$ generates a centroidal Voronoi tessellation (CVT). CVTs can be easily computed using the classical Lloyd algorithm. See, e.g., \cite{Du1999}. Convergence of the algorithm is studied in \cite{Du2006}, \cite{Du1999} and \cite{SabinGray}, among others, and there is a large literature on CVTs and Lloyd's algorithm. However, we are not aware of any work (other than \cite{BournePeletierRoper}) on numerical minimization of $E$ for $f \ne 0$.
\subsection{Applications}
\label{sec:App}
Energies of the form \eqref{eqn:energy}, or equivalently \eqref{eqn:energy_fundamental}, arise in many applications.

\subsubsection{Simple model of pattern formation: block copolymers}
\label{Subsubsec: block copolymer}
The authors first came in contact with energies of the form \eqref{eqn:energy_fundamental} in a pattern formation problem in materials science
\cite{BournePeletierRoper}. The following energy is a simplified model of phase separation for two-phase materials called block copolymers, for the case where one phase has a much smaller volume fraction than the other:
\begin{equation}
\label{eqn:block}
E \left( \{\bx_i,w_i\} \right) = \sum_{i=1}^N \left\{ \lambda  m_i^{\frac{d-1}{d}} + \int_{P_i} |\bx-\bx_i|^2 \,d \bx \right\}
\end{equation}
where $m_i = \int_{P_i} 1 \, dx = | P_i |$ and $d=2$ or $3$.
The measure $\nu = \sum_i m_i \delta_{\bx_i}$ represents the minority phase. In three dimensions, $d=3$, this represents $N$ small spheres of the minority phase centred at $\{ \bx_i \}_{i=1}^N$. The weights $m_i$ give the relative size of the spheres. These spheres are surrounded by a `sea' of the majority phase. In two dimensions, $d=2$, the measure $\nu$ represents $N$ parallel cylinders of the minority phases and $\Omega$ is a cross-section perpendicular to the axes of the cylinders.
The first term of $E$ penalizes the surface area between the two phases and so prefers phase separation ($N=1$), and the second term prefers phase mixing ($N=\infty$). The parameter $\lambda$ represents the repulsion strength between the two phases. Equation \eqref{eqn:block} is the special case of \eqref{eqn:energy} with $\rho=1$ and $f(m)=\lambda m^{\frac{d-1}{d}}$.

This energy can be viewed as a toy model of the popular Ohta-Kawasaki model of block copolymers (see, e.g., \cite{ChoksiPeletierWilliams}). Like the Ohta-Kawasaki energy,
it is non-convex and non-local (in the sense that evaluating $E$ involves solving an auxiliary infinite-dimensional problem).
Unlike the Ohta-Kawasaki energy, however, it is discrete, which makes it much more amenable to numerics and analysis. In general it can be viewed as a simplified model of non-convex, non-local energy-driven pattern formation, and it has applications in materials science outside block copolymers, e.g., to crystallization. It is also connected to the Ginzburg-Landau model of superconductivity \cite[p.~123--124]{BournePeletierTheil}.

In \cite{BournePeletierRoper} it was demonstrated numerically that for $d=2$ minimizers of $E$ tend to a hexagonal tiling as $\lambda \to 0$
(in the sense that the power diagram generated by $\{ \bx_i,w_i \}$ tends to a hexagonal tiling). This was proved in \cite{BournePeletierTheil}, and it agrees with block copolymer experiments, where in some parameter regime the minority phase forms hexagonally packed cylinders. It was conjectured in \cite{BournePeletierRoper} that for the case $d=3$, minimizers of $E$ tend to a body-centred cubic (BCC) lattice as $\lambda \to 0$ (meaning that
$\{ \bx_i \}$ tend to a BCC lattice and $w_i \to 0$). We examine this conjecture in \S \ref{Subsec:3D}. In particular, numerical minimization of $E$ in three dimensions suggests that the BCC lattice is at least a local minimizer of $E$ when $\Omega$ is a periodic box.
Again, this agrees with block copolymer experiments, where in some parameter regime the minority phase forms a BCC lattice.

\subsubsection{Quantization}
Energies of the form \eqref{eqn:energy} can be used for data compression using a technique called \emph{vector quantization}.
By taking $f=0$ in \eqref{eqn:energy} and evaluating the resulting energy at $w_i=0$ for all $i$, so that the power diagram $\{ P_i \}_{i=1}^N$ generated
by $\{\bx_i,0 \}_{i=1}^N$ is just the Voronoi diagram $\{V_i \}_{i=1}^N$ generated by $\{ \bx_i \}_{i=1}^N$, we obtain the energy
\begin{equation}
\label{eqn:D}
D(\{\bx_i \}) = \sum_{i=1}^N \int_{V_i} |\bx-\bx_i|^2 \rho(\bx) \,d \bx \equiv \int_\Omega \min_i | \bx - \bx_i |^2 \rho(\bx) \, d \bx.
\end{equation}
This is known in the quantization literature as the \emph{distortion}. See \cite[Sec.~33]{Gruber07} for a mathematical introduction to vector quantization and \cite{GershoGray} and \cite{GrayNeuhoff} for comprehensive treatments.
Roughly speaking, the points $\bx$ of $\Omega$ represent signals (e.g., parts of an image or speech) and $\bx_i$ represent codewords in the codebook $\{ \bx_i \}_{i=1}^N$.
The function $\rho$ is a probability density on the set of signals $\Omega$.
If a signal $\bx$ belongs to the Voronoi cell $V_i$, then the encoder assigns to it the codeword $\bx_i$, which is then stored or transmitted.
$D$ measures the quality of the encoder, the average distortion of signals. The minimum value of $D$ is called the \emph{minimum distortion}.

In practice distortion is minimized subject to a constraint on the number of bits in the codebook. The codewords $\bx_i$ are mapped to binary vectors before storage or transmission. In \emph{fixed-rate} quantization all these vectors have the same length. In \emph{variable-rate quantization} the length depends on the probability density $\rho$: Let $m_i = \int_{V_i} \rho \, d \bx$ be the probability that a signal lies in Voronoi cell $V_i$. If $m_i$ is large, then $\bx_i$ should be mapped to a short binary vector since it occurs often. For cells with lower probabilities, longer binary vectors can be used. The \emph{rate} of an encoder has the form
\[
R = \sum_{i=1}^N l_i m_i
\]
where $l_i$ is the length of the binary vector representing $\bx_i$. Note that $R$ is the expected value of the length.
Distortion $D$ is decreased by choosing more codewords. On the other hand, this means that the rate $R$, and hence the storage/transmission cost, is increased.
Optimal encoders can be designed by trading off distortion against rate by minimizing energies of the form
\[
\lambda R + D
\]
where $\lambda$ is a parameter determining the tradeoff. See \cite[p.~2342]{GrayNeuhoff}. Our energy \eqref{eqn:energy} generalises this: Take $l_i = l(1/m_i)$ for some concave function $l$ so that $m \mapsto l(1/m)m$ is concave.
In addition, $l$ should be increasing so that the code length decreases as the probability $m$ increases.
 We replace the Voronoi cells in \eqref{eqn:D} with power cells, which means that signals in power cell $P_i$ are mapped to codeword $\bx_i$. Then the energy $\lambda R + D$ has the form of \eqref{eqn:energy}:
\[
E(\{ \bx_i,w_i \}) = \sum_{i=1}^N \left\{ f(m_i)+\int_{P_i} |\bx-\bx_i|^2 \rho(\bx) \,d \bx \right\} \quad \textrm{where} \quad f(m) = \lambda l\left( \dfrac 1m \right) m.
\]


\subsubsection{Optimal location of resources}
As discussed in \S\ref{sec:1.1} and \S\ref{Subsec: Non-const}, energies of the form \eqref{eqn:energy_fundamental} and \eqref{eqn:energy} can be used to model the optimal location of resources $\{ \bx_i \}$ in a city or country $\Omega$ with population density $\rho$. The resources have size $m_i$, serve region $P_i$, and cost $f(m_i)$ to build or run. The assumption that $f$ is concave (introduced for mathematical convenience to prove Theorem \ref{thm:energyDecrease}) is also natural from the modelling point of view since it corresponds to an economy of scale. The energy trades off building/running costs against distance between the population and the resources.

\subsubsection{Other applications and connections}
  Energies of the form \eqref{eqn:energy}, usually with $f=0$, also arise in data clustering and pattern recognition ($k$-means clustering) \cite{Hartigan}, \cite{MacQueen}, image compression (this is a special case of vector quantization) \cite[Sec.~2.1]{Du1999}, numerical integration \cite[Sec.~2.2]{Du1999}, \cite[p.~497--499]{Gruber07} and convex geometry (packing and covering problems, approximation of convex bodies by convex polytopes) \cite[Sec.~33]{Gruber07}. Taking $f \ne 0$ in \eqref{eqn:energy} gives the algorithm more freedom, e.g., to automatically select the number of data clusters in addition to their location, based on a cost per cluster.

  Voronoi diagrams have recently gained a lot of interest in the materials science community, e.g., to model solid foams \cite{Harrison} and grains in metals \cite{Kok}, although this is usually done in a more heuristic manner than by energy minimization. Global minimizers of $E$ can be difficult to find if they have a large value of $N$, and the generalized Lloyd algorithm tends to converge to local minimizers. These often resemble grains in metals, see Figure \ref{fig:flatness_large}, which suggests that energy minimization might be a good method to produce Representative Volume Elements for the finite element simulation of materials with microstructure.

  Several important PDEs, such as the heat equation and Fokker-Plank equation, can be written as a time-discrete gradient flow of an energy with respect to the Wasserstein distance \cite{JordanKinderlehrerOtto}. For example, for the heat equation, an energy related to \eqref{eqn:energy_fundamental} is minimized at every time step, with the important differences that the first term of the energy is the integral of a convex function (as opposed to the sum of a concave function) and the second term is the Wasserstein distance between two absolutely continuous measures (as opposed to between an absolutely continuous measure and an atomic measure). A spatial discretization would bring the second terms in line and replace the integral in the first term by a sum. It could be argued, however, that we do not need another numerical method to solve the linear heat equation.  Energies involving the Wasserstein distance also arise in models of dislocation dynamics \cite{Lucia}.

\subsection{Limitations of the algorithm}
\label{sec:Lim}
First we discuss the assumptions on the data given in equation \eqref{eq:assump}.

The assumption that $\Omega$ is convex ensures that
the centroid of each power cell lies in $\Omega$. Without this assumption the algorithm could produce an unfeasible solution with $\bx_i \notin \Omega$
for some $i$. For example, if $\Omega$ is the annulus $A(r_1,r_2)$ centred at the origin, $\rho=1$, and $f$ is chosen suitably, then $E$ is minimized when $N=1$ by $(\bx_1,w_1)$ in which $|\bx_1|=r_1$ (the generator lies on the interior boundary of the annulus) and $w_1$ is irrelevant (in the case where there is only one cell the weight is not determined).
The generalized Lloyd algorithm, however, initialised with $N_0=1$, would return $\bx = \bO \notin \Omega$. This strong limitation on the shape of $\Omega$ means that the algorithm cannot be used to solve optimal location problems in highly nonconvex countries like Scotland. We plan to address this issue in a future paper.

The concavity assumption on $f$, $f'' \le 0$, is necessary to prove Theorem \ref{thm:energyDecrease}, which asserts that step (1) of the algorithm decreases the energy at every iteration. As discussed in \S \ref{sec:App}, it is also a reasonable modelling assumption for many applications.
The assumption that $f(0)\ge 0$ ensures that iteration step (2) is also energy decreasing.

If $f$ is convex then the energy behaves very differently and the generalized Lloyd algorithm may not be suitable. The first term is not necessarily minimized when $N=1$, but when all the power cells have the same mass, since by Jensen's inequality
\[
\sum_{i=1}^N f(m_i) \ge N f \left( \dfrac{M}{N} \right) \quad \textrm{where} \quad M = \int_{\Omega} \rho \, d \bx.
\]
If in addition $f \ge 0$ and $N f(M/N) \to 0$ as $N \to \infty$, then $E$ does not have a global minimizer.
Its infimum is zero, obtained in the limit $N \to \infty$
by approximating $\rho$ arbitrarily well by dirac masses. We have not studied the case where $f$ is neither concave nor convex.

As discussed in \S \ref{sec:genLloyd}, another limitation of the algorithm is that, while it can annihilate generators, step (2), it cannot create them. Therefore the initial guess $N_0$ for the optimal number of generators should be an over estimate. This limitation could be addressed by using a simulated annealing method to randomly introduce new generators at certain iterations. This could also be used to prevent the algorithm from getting stuck at a local minimizer.

\subsection{Generalizations}
While we have focussed on energy \eqref{eqn:energy}, our general methodology could be easily applied to broader classes of optimal location energies where the first term is more general, e.g., to
\[
E \left( \{\bx_i,w_i\} \right) = g(\{\bx_i,m_i\})+ \sum_{i=1}^N \int_{P_i} |\bx-\bx_i|^2 \rho(\bx) \,d \bx
\]
where $m_i = \int_{P_i} \rho \, d \bx$.

Our algorithm can also be modified to minimize the following energy, which is obtained from \eqref{eqn:energy_fundamental} by replacing the square of the 2-Wasserstein distance with the $p$-th power of the $p$-Wasserstein distance, $p \in [1,\infty)$:
\[
F_p\left(\{\bm{x}_i,m_i\}\right)=\sum_{i=1}^N f(m_i)+d^p_p\left(\rho,\sum_{i=1}^N m_i\delta_{x_i}\right).
\]
See \cite[Chap.~7]{Villani} for the definition of $d_p(\cdot,\cdot)$.
In this case the energy can be rewritten in terms of what we call \emph{$p$-power diagrams}.
These are a generalization of power diagrams where the cells generated by $\{ \bx_i, w_i \}$ are defined by
\[
P_i=\{\bm{x}\in\Omega : \left|\bm{x}-\bm{x}_i\right|^p-w_i\le\left|\bm{x}-\bm{x}_k\right|^p-w_k\;\forall \; k\}.
\]
For $p=2$ this is just the power diagram.
For $p=1$ this is known as the Appollonius diagram (or the additively weighted Voronoi diagram, or the Voronoi diagram of disks).
For general $p$ there does not seem to be a standard name, although they fall into the class of
generalized Dirichlet tessellations, or generalized additively weighted Voronoi diagrams.
It can be shown that minimizing $F_p$ is equivalent to minimizing
\begin{equation}
E_p \left( \{\bx_i,w_i\} \right) = \sum_{i=1}^N \left\{ f(m_i)+\int_{P_i} |\bx-\bx_i|^p \rho(\bx) \,d \bx \right\}
\end{equation}
where $\{P_i \}$ is the $p$-power diagram generated by $\{\bx_i,w_i\}$ and $m_i := \int_{P_i} \rho \, d \bx$.
 See \cite[Sec.~4.2]{BournePeletierRoper}. Critical points of $E_p$ can be found using a modification of the generalized Lloyd algorithm where
 for each $i$ the map $\bxi_i$ returns the $p$-centroid of the $p$-power cell $P_i$, i.e., $\bxi_i(\bX,\bw)$ satisfies the equation
 \begin{equation}
 \label{eq:p_cent}
 \int_{P_i} (\bxi_i-\bx)|\bxi_i-\bx|^{p-2} \, dx = \bO.
 \end{equation}
 See \cite[Th.~4.16]{BournePeletierRoper}.
 For the case $p=2$ this equation just says that $\bxi_i$ is the centroid of $P_i$.
 Therefore in principle the algorithm can be extended to all $p \in [1,\infty)$. In practice it is much harder to implement.
 Except for the cases $p=1,2$, we are not aware of any efficient algorithms for computing $p$-power diagrams. This is due to the fact that
 for $p \ne 2$ the boundaries between cells are curved  (unless all the weights are equal).
 In addition, evaluating the Lloyd map $\bxi(\bX,\bw)$ involves solving the nonlinear equation \eqref{eq:p_cent}. We plan to say more about this aspects in a future paper.

\subsection{Structure of the paper}
The generalized Lloyd algorithm, Algorithm \ref{algo:genLloyd}, is derived in \S\ref{sec:deriv}. In \S\ref{sec:prop} we prove that it is energy decreasing, prove a convergence theorem, and study its structure. Implementation issues, such as how to compute power diagrams, are discussed in \S\ref{sec:implement}. Numerical illustrations in two and three dimensions are given in \S\ref{sec:illus}. In the appendix we give some useful formulas for implementing the algorithm for the special case $\rho=$ constant, in which case it is not necessary to use a quadrature rule.


\section{Derivation of the algorithm}
\label{sec:deriv}

In this section we derive the generalized Lloyd algorithm, Algorithm \ref{algo:genLloyd}, which is a fixed point method for the calculation of stationary points of the energy $E$, defined in equation \eqref{eqn:energy}. Calculating the gradient of $E$ requires care since this involves differentiating the integrals appearing in the definition of $E$ with respect to their domains. We perform this calculation in \S\ref{Subsec: H} and \S\ref{Subsec: Crit points}, after introducing some notation in \S\ref{Subsec: Notation}.

\subsection{Notation for power diagrams}
\label{Subsec: Notation}
Throughout this paper we take $\Omega$ to be a bounded, convex subset of $\mathbb{R}^d$, $d \ge 2$.
We will take $d=2$ or $3$ for purposes of illustration, but the theory developed applies for all $d\ge 2$.

Given weighted points
$(\bX,\bw)=((\bx_1, \ldots, \bx_N),(w_1, \ldots , w_N))
\in \Omega^N \times \mathbb{R}^N$ and the associated power diagram $\{P_i\}_{i=1}^N$ (defined in equation \eqref{eq:pd}), we introduce the following notation:
\begin{gather}
\label{eqn:def1}
\quad d_{ij} = | \bx_j - \bx_i |, \qquad \bn_{ij} = \frac{\bx_j - \bx_i}{d_{ij}}, \qquad F_{ij} = P_i \cap P_j, \\
\label{eqn:def2}
m_i = \int_{P_i} \rho(\bx) \, d \bx, \qquad m_{ij} = \int_{F_{ij}} \rho(\bx) \, d \bx, \\
\label{eqn:def3}
\overline{\bx}_i = \frac{1}{m_i} \int_{P_i} \bx \rho (\bx) \, d \bx, \qquad \overline{\bx}_{ij} = \frac{1}{m_{ij}} \int_{F_{ij}} \bx \rho (\bx) \, d \bx, \\
\label{eqn:def4}
J_i = \{ j \ne i : P_i \cap P_j \ne \emptyset \}.
\end{gather}
Here $d_{ij}$ is the distance between points $\bx_i$ and $\bx_j$; $\bn_{ij}$ is the unit vector pointing from $\bx_i$ to $\bx_j$; the set $F_{ij}$ is the \emph{face} common to both cells $P_i$ and $P_j$; $m_i$ is the mass of cell $P_i$; $m_{ij}$ is the mass of face $F_{ij}$; $\overline{\bx}_i$ is the \emph{centre of mass} of the cell $P_i$ and $\overline{\bx}_{ij}$ is the centre of mass of face $F_{ij}$. The set of indices of the neighbours of cell $P_i$ is given by the index set $J_i$.
In the case $d=2$ the power cells are convex polygons and rather than referring to the intersections of neighbouring cells as faces, we refer to them as edges.

Recall that we sometimes write $P_i(\bX,\bw)$ for the power cells generated by $(\bX,\bw)$,
instead of simply $P_i$, to emphasize that the power diagram is generated by $(\bX,\bw)$. Similarly, we will sometimes write $m_i(\bX,\bw)$ for the mass of the $i$-th power cell. From equation \eqref{eq:pd} it is easy to see that adding a constant $c \in \mathbb{R}$ to all the weights generates the same power diagram: $P_i(\bX,\bw+\bc)=P_i(\bX,\bw)$ for all $i$, where $\bc = (c, \ldots , c ) \in \mathbb{R}^N$.
Let $\mathbb{R}_+ = [0,\infty)$ and let $\bom : \Omega^N \times \mathbb{R}^N \to \mathbb{R}_{+}^N$ be the function defined by
\begin{equation}
\label{eqn:m}
\bom (\bX,\bw) = (m_1(\bX,\bw), \ldots, m_N(\bX,\bw)),
\end{equation}
which gives the mass of all of the cells generated by $(\bX,\bw)$. Note that some of the cells may be empty (at most $N-1$ of them), in which case the corresponding components of $\bom$ take the value zero. Given a density $\rho : \Omega \to [0,\infty)$, let the space of admissible masses be
\begin{equation}
\mathcal{M}^N = \left\{ \bM \in \mathbb{R}_+^N : \sum_{i=1}^N M_i = \int_\Omega \rho(\bx) \, d \bx \right\}.
\end{equation}

Throughout this paper $\bI_m$ denotes the $m$-by-$m$ identity matrix.

\subsection{The helper function $H$}
\label{Subsec: H}

Motivated by \cite{Du2006}, where convergence of the classical Lloyd algorithm is studied, we introduce a helper function
$H$ 
defined by
\begin{multline}
H\left((\bX^1,\bw^1),(\bX^2,\bw^2),\bM \right) := \\
\sum_{i=1}^N \left\{ M_i w^1_i + f(M_i)
+ \int_{P_i(\bX^2,\bw^2)} ( |\bx-\bx^1_i|^2-w^1_i ) \rho (\bx) \, d \bx
\right\}
\end{multline}
where $(\bX^k,\bw^k) = ((\bx_1^k,\ldots,\bx_N^k),(w_1^k,\ldots,w_N^k))$ for $k \in \{1, 2 \}$, $\bM = (M_1,\ldots,M_N)$, and the domain of $H$ is
$(\Omega^N \times \mathbb{R}^N) \times (\Omega^N \times \mathbb{R}^N) \times \mathcal{M}^N$.
The energy $E$ is recovered by choosing the arguments of $H$ appropriately:
\begin{equation}
\label{eqn:E=H}
E\left( \bX,\bw \right) = H\left((\bX,\bw),(\bX,\bw),\bom(\bX,\bw)\right).
\end{equation}
Note that $H$ is invariant under addition of a constant to all the weights:
\begin{equation}
H\left((\bX^1,\bw^1+\bc_1),(\bX^2,\bw^2+\bc_2),\bM \right) = H\left((\bX^1,\bw^1),(\bX^2,\bw^2),\bM \right)
\end{equation}
for all $\bc_i = c_i (1, \ldots ,1) \in \mathbb{R}^N$, $i \in \{ 1,2 \}$, since $\bM \in \mathcal{M}^N$.

\begin{lemma}[Properties of $H$]
\label{lemma:H}
Let $\bxi$, $\omega$ be the Lloyd maps defined in equation \eqref{eqn:lloydmap}. Then
\begin{align}
\nonumber
& (i) \quad \min_{\bX^1 \in \Omega^N} H \left((\bX^1,\bw^1),(\bX^2,\bw^2), \bM \right) =
H \left( \left( \bxi (\bX^2,\bw^2),\bw^1 \right) , (\bX^2,\bw^2), \bM \right), \\
\nonumber
& (ii) \quad H \left( (\bX,\bw^1),(\bX,\bw),\bom(\bX,\bw) \right) = E\left( \bX,\bw \right),
\textrm{ i.e., is independent of } \bw^1,
\\
\nonumber
& (iii) \; \, \, H \left( (\bX^1,\bw^1), (\bX^2,\bw^2) ,\bM \right) \ge H \left((\bX^1,\bw^1),(\bX^1,\bw^1),\bM \right),
\\
\nonumber
&  \phantom{(iii)}
\textrm{ with equality if and only if } P_i(\bX^1,\bw^1)=P_i(\bX^2,\bw^2) \textrm{ for all } i,
\\
\nonumber
& (iv) \; \, \max_{\bM \in \mathbb{R}_+^N} H \left( \left( \bX^1, \bomega(\bX^2,\bw^2) \right),(\bX^2,\bw^2), \bM \right) =
\\
\nonumber
& \phantom{aaaaaaaaaaaaaaaaaaaaaaaaaaaaa} H \left( \left( \bX^1, \bomega(\bX^2,\bw^2) \right), (\bX^2,\bw^2), \bom(\bX^2,\bw^2) \right).
\end{align}
\end{lemma}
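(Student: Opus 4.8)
The plan is to exploit the fact that $H$ was engineered so that each of its arguments enters in a structurally simple way, and that each of the four claims isolates an optimization in exactly one of them. I would dispose of (ii) first, since it is a direct substitution: putting $\bX^1=\bX^2=\bX$ and $\bM=\bom(\bX,\bw)$, the $i$-th summand contains $m_i(\bX,\bw)\,w_i^1$ from the first bracket together with $-\int_{P_i(\bX,\bw)} w_i^1\,\rho\,d\bx = -w_i^1\,m_i(\bX,\bw)$ from the integral; these cancel, and what remains is exactly $\sum_i\{f(m_i)+\int_{P_i}|\bx-\bx_i|^2\rho\,d\bx\}=E(\bX,\bw)$, with no residual dependence on $\bw^1$.

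For (i) I observe that the only place the positions $\bX^1$ appear in $H$ is the sum $\sum_i \int_{P_i(\bX^2,\bw^2)}|\bx-\bx_i^1|^2\rho(\bx)\,d\bx$, which decouples into $N$ independent scalar problems, one per $\bx_i^1$. Using the variance (parallel-axis) decomposition
\[
\int_{P_i(\bX^2,\bw^2)}|\bx-\bx_i^1|^2\rho\,d\bx = \int_{P_i(\bX^2,\bw^2)}|\bx-\overline{\bx}_i|^2\rho\,d\bx + m_i\,|\overline{\bx}_i-\bx_i^1|^2,
\]
where $\overline{\bx}_i=\bxi_i(\bX^2,\bw^2)$ is the centroid of $P_i(\bX^2,\bw^2)$, each term is minimized precisely at $\bx_i^1=\overline{\bx}_i$. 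Here the convexity of $\Omega$ is exactly what guarantees that the minimizer $\overline{\bx}_i$ is a feasible point of $\Omega^N$, so that the minimum is attained at $\bX^1=\bxi(\bX^2,\bw^2)$.

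Claim (iii) is the substantive one. Its second argument enters $H$ only through the integration domains, while the integrand $g_i(\bx):=|\bx-\bx_i^1|^2-w_i^1$ is fixed by the first argument $(\bX^1,\bw^1)$. The key is the defining property of the power diagram in \eqref{eq:pd}: for $\bx\in P_i(\bX^1,\bw^1)$ one has $g_i(\bx)=\min_k g_k(\bx)$, so that $\sum_i\int_{P_i(\bX^1,\bw^1)}g_i\rho\,d\bx=\int_\Omega \min_k g_k(\bx)\,\rho(\bx)\,d\bx$ (the cells overlap only on null sets). For any other partition, in particular $\{P_i(\bX^2,\bw^2)\}$, one has $g_i(\bx)\ge\min_k g_k(\bx)$ pointwise on $P_i(\bX^2,\bw^2)$, which yields the inequality after summing and integrating. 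Equality forces $g_i=\min_k g_k$ holding $\rho$-almost everywhere on each $P_i(\bX^2,\bw^2)$; since both families partition $\Omega$, this pins down $P_i(\bX^1,\bw^1)=P_i(\bX^2,\bw^2)$ for all $i$ up to $\rho\,d\bx$-null sets. Stating this equality clause cleanly, rather than just the inequality, is the part I expect to need the most care.

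Finally, for (iv) the dependence on $\bM$ is separable and the integral term is constant in $\bM$, so with $\bw^1=\bomega(\bX^2,\bw^2)$, i.e.\ $w_i^1=-f'(m_i(\bX^2,\bw^2))$, the maximization reduces to maximizing $\phi_i(M_i):=M_i w_i^1+f(M_i)$ over $M_i\ge 0$. By concavity of $f$ the tangent-line bound $f(M_i)\le f(m_i)+f'(m_i)(M_i-m_i)$ gives $\phi_i(M_i)\le\phi_i(m_i)$ for every $M_i$, where $m_i=m_i(\bX^2,\bw^2)\ge 0$; hence the maximum over $\mathbb{R}_+^N$ is attained at $\bM=\bom(\bX^2,\bw^2)$. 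Taken together, (i)--(iv) will later drive the energy-decrease proof, with (iii) supplying the geometric heart and (i) and (iv) encoding the position and mass Lloyd updates respectively.
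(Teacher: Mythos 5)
Your proof is correct, and its overall structure is the same as the paper's: each of (i)--(iv) is disposed of by an optimization in a single argument of $H$, part (ii) is the direct cancellation $M_iw_i^1 - w_i^1m_i(\bX,\bw)=0$, and your argument for (iii) --- identifying $\sum_i\int_{P_i(\bX^1,\bw^1)}g_i\,\rho\,d\bx$ with $\int_\Omega\min_k g_k\,\rho\,d\bx$ and comparing any other partition against it pointwise, with the equality case holding up to $\rho\,d\bx$-null sets --- is exactly the paper's. The only genuine differences are in technique for (i) and (iv): the paper computes first derivatives to exhibit $\bxi(\bX^2,\bw^2)$ and $\bom(\bX^2,\bw^2)$ as critical points and then invokes convexity (Hessian $2m_i\bI_d$) and concavity (Hessian $\mathrm{diag}(f''(M_i))$) to promote them to global optima, whereas you use the parallel-axis decomposition for (i) and the tangent-line bound $f(M_i)\le f(m_i)+f'(m_i)(M_i-m_i)$ for (iv). These are equivalent arguments, but your global versions handle the constraint sets slightly more cleanly: the minimum in (i) is taken over $\Omega^N$ and the maximum in (iv) over $\mathbb{R}_+^N$, and your explicit remark that convexity of $\Omega$ is what makes the centroid $\bxi(\bX^2,\bw^2)$ a feasible point is a step the paper's unconstrained critical-point argument leaves implicit (the paper only records this role of convexity in its discussion of assumptions, not inside the proof).
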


\begin{proof}
Property (i): For fixed $\bX^2 \in \Omega^N$, $\bw^1, \bw^2 \in \mathbb{R}^N$ and $\bM \in \mathcal{M}^N$, define the function
 $h:\Omega^N \to \mathbb{R}$ by $h(\bX^1) := H \left((\bX^1,\bw^1),(\bX^2,\bw^2), \bM \right)$.
Then
\[
\frac{\partial h}{\partial \bx_i^1} (\bX^1) = 2 \int_{P_i(\bX^2,\bw^2)} (\bx^1_i-\bx) \rho(\bx) \, d \bx = 2 m_i(\bX^2,\bw^2) (\bx_i^1-\bxi_i(\bX^2,\bw^2))
\]
by the definition \eqref{eqn:lloydmap} of $\bxi_i$.
Therefore $\bxi ( \bX^2,\bw^2 )$ is a critical point of $h$. Moreover it is a global minimum point since $h$ is convex:
\[
\frac{\partial^2 h}{\partial \bx_i^1 \partial \bx^1_j} = \left\{
\begin{array}{cl}
2 m_i(\bX^2,\bw^2) \bI_d & \textrm{if } i=j, \\
\bO & \textrm{if } i \ne j,
\end{array}
\right.
\]
where $\bI_d$ and $\bO$ are the $d$-by-$d$ identity and zero matrices.
(Note that $h$ is not necessarily strictly convex since $m_i(\bX^2,\bw^2)$ may be zero for some $i$, which is the case when the power cell
 $P_i(\bX^2,\bw^2)$ is empty.)

Property (ii) is immediate from the definitions of $H$ and $E$.

Property (iii): This follows from the fact that for any partition $\{ S_i \}_{i=1}^N$ of $\Omega$ we have
\[
\sum_i \int_{S_i} ( |\bx-\bx^1_i|^2-w^1_i ) \rho (\bx) \, d \bx
\ge
\sum_i \int_{P_i(\bX^1,\bw^1)} ( |\bx-\bx^1_i|^2-w^1_i ) \rho (\bx) \, d \bx
\]
with equality if and only if $\{ S_i \}_{i=1}^N$ is the power diagram generated by $(\bX^1,\bw^1)$ (up to sets of $\rho \, d \bx$--measure zero).
This follows since
\[
\sum_i \int_{P_i(\bX^1,\bw^1)} ( |\bx-\bx^1_i|^2-w^1_i ) \rho (\bx) \, d \bx = \int_\Omega \min_i \{ |\bx-\bx^1_i|^2-w^1_i \} \rho (\bx) \, d \bx.
\]

Property (iv): First we check that $\bom \left( \bX^2 , \bw^2 \right)$ is a critical point of the function defined by
$g \left( \bM \right) =  H \left( \left( \bX^1, \bomega(\bX^2,\bw^2) \right),(\bX^2,\bw^2), \bM \right)$:
\begin{equation}
\nonumber
\frac{\partial g}{\partial M_j} \left( \bom \left( \bX^2 , \bw^2 \right) \right)
=
\omega_j \left( \bX^2 , \bw^2 \right) + f'\left( m_j \left( \bX^2 , \bw^2 \right) \right) = 0
\end{equation}
by the definition \eqref{eqn:lloydmap} of $\omega_j$.
Note that the function $g$ is concave since its Hessian is diagonal with non-positive diagonal entries:
\begin{equation}
\nonumber
D^2 g = \textrm{diag} \left( f''(M_1),f''(M_2),\ldots,f''(M_N) \right).
\end{equation}
Therefore the critical point $\bom \left( \bX^2 , \bw^2 \right)$ is a global maximum point of $g$, as required.
\end{proof}

\subsection{Critical points of $E$}
\label{Subsec: Crit points}
In this section we show that critical points of $E$ are fixed points of the Lloyd maps $\bxi$, $\bomega$.

\begin{lemma}[Partial derivatives of $E$]
\label{lem:DE}
The partial derivatives of $E$ are
\begin{align}
\label{eqn:E_x}
\frac{\partial E}{\partial \bx_i}(\bX,\bw)
& = 2 m_i (\bx_i - \bxi_i(\bX,\bw)) + \sum_{j=1}^N \frac{\partial m_j}{\partial \bx_i} (w_j - \omega_j (\bX,\bw)),
\\
\label{eqn:E_w}
\frac{\partial E}{\partial w_i}(\bX,\bw)
& = \sum_{j=1}^N \frac{\partial m_j}{\partial w_i} (w_j - \omega_j (\bX,\bw))
\end{align}
for $i \in \{ 1, \ldots, N \}$. In matrix notation:
\begin{equation}
\label{eq:mform}
\begin{pmatrix}
\nabla_{\bX} E \\ \nabla_{\bw} E
\end{pmatrix}
=
\begin{pmatrix}
2 \hat{\bM} & \nabla_{\bX} \bom \\
\bO & \nabla_{\bw} \bom
\end{pmatrix}
\begin{pmatrix}
\bX - \bxi(\bX,\bw) \\
\bw - \bomega(\bX,\bw)
\end{pmatrix}
\end{equation}
where
\begin{equation}
\label{eq:Mhat}
\hat{\bM} := \mathrm{diag}(m_1,\ldots,m_N) \otimes \bI_{d} =
\mathrm{diag}(m_1 \bI_{d},\ldots, m_N \bI_{d}).
\end{equation}
\end{lemma}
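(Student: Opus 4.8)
The plan is to differentiate the identity \eqref{eqn:E=H}, namely $E(\bX,\bw) = H((\bX,\bw),(\bX,\bw),\bom(\bX,\bw))$, by the chain rule. The whole point of introducing $H$ is that it splits the dependence of $E$ on the generators into three groups of variables that can be treated separately: the ``Lloyd'' slot $(\bX^1,\bw^1)$, the ``domain'' slot $(\bX^2,\bw^2)$ that controls the integration regions $P_i(\bX^2,\bw^2)$, and the mass slot $\bM$. Writing $*$ for evaluation at the diagonal point $((\bX,\bw),(\bX,\bw),\bom(\bX,\bw))$, and noting that $\bx_i$ enters the first argument, the second argument, and the third argument (through $\bom$), the chain rule gives
\[
\frac{\partial E}{\partial \bx_i} = \left.\frac{\partial H}{\partial \bx_i^1}\right|_* + \left.\frac{\partial H}{\partial \bx_i^2}\right|_* + \sum_{j=1}^N \left.\frac{\partial H}{\partial M_j}\right|_* \frac{\partial m_j}{\partial \bx_i},
\]
and the analogous identity with $\bx$ replaced by $w$ throughout for $\partial E/\partial w_i$.

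First I would compute the two ``easy'' slot derivatives of $H$ directly. The key simplification is that in the $(\bX^1,\bw^1)$ and $\bM$ slots the integration domain $P_i(\bX^2,\bw^2)$ is held fixed, so I can differentiate under the integral sign with no boundary terms. This is exactly the calculation already performed inside the proof of Lemma~\ref{lemma:H}: $\partial H/\partial \bx_i^1 = 2 m_i(\bX^2,\bw^2)(\bx_i^1 - \bxi_i(\bX^2,\bw^2))$, which equals $2 m_i (\bx_i - \bxi_i(\bX,\bw))$ at $*$; $\partial H/\partial w_i^1 = M_i - m_i(\bX^2,\bw^2)$, which vanishes at $*$ because $M_i = m_i(\bX,\bw)$ there (this is property (ii)); and $\partial H/\partial M_j = w_j^1 + f'(M_j)$, which equals $w_j + f'(m_j) = w_j - \omega_j(\bX,\bw)$ at $*$ by the definition $\omega_j = -f'(m_j)$.

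The delicate term, and the step I expect to be the main obstacle, is the ``domain'' derivative $\partial H/\partial \bx_i^2$ (and $\partial H/\partial w_i^2$): here the integration region $P_i(\bX^2,\bw^2)$ genuinely moves with the variable, and a direct computation would produce a sum of face integrals over the $F_{ij}$. This is precisely where the helper function earns its keep, and I would sidestep the computation entirely rather than confront it. By property (iii) of Lemma~\ref{lemma:H}, for fixed $(\bX^1,\bw^1)$ and $\bM$ the map $(\bX^2,\bw^2) \mapsto H$ attains its global minimum whenever the power diagram of $(\bX^2,\bw^2)$ agrees with that of $(\bX^1,\bw^1)$. At $*$ we have $(\bX^2,\bw^2)=(\bX^1,\bw^1)=(\bX,\bw)$, which is such a minimizer and lies in the interior, so the first-order optimality condition forces $\partial H/\partial \bx_i^2|_* = \bO$ and $\partial H/\partial w_i^2|_* = 0$. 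The only hypothesis I need is that $H$ be differentiable in the second slot at $*$, which holds for $(\bX,\bw)\in\mathcal{G}^N$ since the faces depend smoothly on the generators of a nondegenerate power diagram (and this differentiability is in any case implicit in the statement, which features $\partial m_j/\partial \bx_i$ and $\partial m_j/\partial w_i$).

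Assembling the chain rule then yields \eqref{eqn:E_x} and \eqref{eqn:E_w} at once: for the position derivative the surviving contributions are $2m_i(\bx_i-\bxi_i)$ together with $\sum_j (w_j - \omega_j)\,\partial m_j/\partial \bx_i$, while for the weight derivative both ``slot'' terms vanish and only $\sum_j (w_j-\omega_j)\,\partial m_j/\partial w_i$ remains. Finally, the matrix form \eqref{eq:mform} is a direct transcription: the centroid terms collect into the block $2\hat{\bM}(\bX-\bxi)$ with $\hat{\bM}$ as in \eqref{eq:Mhat}, the coefficients $\partial m_j/\partial \bx_i$ and $\partial m_j/\partial w_i$ are the entries of the Jacobian blocks $\nabla_{\bX}\bom$ and $\nabla_{\bw}\bom$ acting on $\bw-\bomega$, and the zero block reflects the absence of any centroid term in $\partial E/\partial w_i$.
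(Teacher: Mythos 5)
Your proposal is correct and follows essentially the same route as the paper: differentiate the identity $E(\bX,\bw)=H((\bX,\bw),(\bX,\bw),\bom(\bX,\bw))$ by the chain rule, compute the first-slot and mass-slot derivatives of $H$ directly (no boundary terms since the domains are frozen), and kill the second-slot (``domain'') derivatives via the first-order optimality condition coming from Lemma \ref{lemma:H}(ii),(iii) — exactly the paper's argument. The only difference is cosmetic: you verify $\partial H/\partial w_i^1|_*=M_i-m_i(\bX^2,\bw^2)=0$ by direct computation where the paper cites Lemma \ref{lemma:H}(ii), and you make explicit the (correct) differentiability caveat that the paper leaves implicit.
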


\begin{proof}
From equation \eqref{eqn:E=H},
\begin{equation}
\label{eqn:E_x 2}
\frac{\partial E}{\partial \bx_i}(\bX,\bw) = \frac{\partial H}{\partial \bx^1_i} + \frac{\partial H}{\partial \bx^2_i}
+ \sum_j \frac{\partial H}{\partial M_j} \frac{\partial m_j}{\partial \bx_i}
\end{equation}
where the derivatives of $H$ are evaluated at $((\bX,\bw),(\bX,\bw),\bom(\bX,\bw))$.
The second term on the right-hand side is zero by Lemma \ref{lemma:H}(iii). Direct computation (as in the proof of Lemma \ref{lemma:H}(i),(iv)) gives
\begin{equation}
\label{eqn:H_x1, H_m}
\frac{\partial H}{\partial \bx^1_i} = 2 m_i (\bx_i-\bxi_i), \quad
\frac{\partial H}{\partial M_j} = w_j + f'(m_j(\bX,\bw)).
\end{equation}
Combining \eqref{eqn:E_x 2}, \eqref{eqn:H_x1, H_m} and the definition of $\omega_j$ yields \eqref{eqn:E_x}.

Differentiating \eqref{eqn:E=H} with respect to $w_i$ gives
\begin{equation}
\label{eqn:E_w 2}
\frac{\partial E}{\partial w_i}(\bX,\bw) = \frac{\partial H}{\partial w^1_i} + \frac{\partial H}{\partial w^2_i}
+ \sum_j \frac{\partial H}{\partial M_j} \frac{\partial m_j}{\partial w_i}
\end{equation}
where the derivatives of $H$ are evaluated at $((\bX,\bw),(\bX,\bw),\bom(\bX,\bw))$.
The first two terms on the right-hand side are zero by Lemma \ref{lemma:H}(ii),(iii). Therefore combining \eqref{eqn:E_w 2} and
\eqref{eqn:H_x1, H_m}$_2$ yields \eqref{eqn:E_w}.
\end{proof}

\paragraph{Weighted graph Laplacian matrices}
Given a power diagram $\{P_i (\bX,\bw) \}$ define a graph $G$ that has as vertices $\bX$, and edges given by the neighbour relations of the power diagram: $\bx_i$ is connected by an edge to $\bx_j$ if and only if $i \in J_j$ (and equivalently $j \in J_i$). If we associate a weight $u_{ij}=u_{ji}$ to each edge of this graph, then we can define the weighted graph Laplacian matrix $L= L(G,u)$ by
\begin{equation}
L_{ij} =
\left\{
\begin{array}{cl}
\displaystyle \sum_{k \in J_j} u_{jk} & \textrm{if } i=j, \\
- u_{ij} & \textrm{if } i \in J_j, \\
0 & \textrm{otherwise}.
\end{array}
\right.
\end{equation}
The symmetric matrix $L$ is the difference between the weighted degree matrix and weighted adjacency matrix of $G$. It is well-known that the dimension of the null space of $L$ equals the number of connected components of $G$. See \cite[p.~117, Th.~3.1]{Mohar2004}. In our case $G$ is connected and so, for any edge-weighting $u$, the null space of $L(G,u)$ is one-dimensional and is spanned by $(1,1,\ldots,1)$. In an analogous way, one can define (block) weighted graph Laplacian matrices for vector-valued weights $\bm{u}_{ij}$.

Computing the derivatives of $m_j$ that appear in equations \eqref{eqn:E_x} and \eqref{eqn:E_w} is delicate since this involves differentiating the integrals $m_j = \int_{P_j(\bX,\bw)} \rho \, d \bx$ with respect to $\bx_i$ and $w_i$. It turns out that these derivatives are weighted graph Laplacian matrices:

\begin{lemma}[Weighted graph Laplacian structure of $\nabla_{\bX} \bom$ and $\nabla_{\bw} \bom$]
\label{lem:Dm}
Let $(\bX,\bw) \in \mathcal{G}^N$ be the generators of a power diagram with the generic property that adjacent cells have a common face (a common edge in 2D).
The partial derivatives of $\bom(\bX,\bw)$ are
\begin{align}
\nonumber
\frac{\partial m_j}{\partial \bx_i}
& =
\left\{
\begin{array}{cl}
\displaystyle \sum_{k\in J_j}\frac{m_{jk}}{d_{jk}}\left(\overline{\bx}_{jk}-\bm{x}_j\right) & \textrm{if } i=j,
\vspace{0.1cm}
\\
\displaystyle -\frac{m_{ij}}{d_{ij}}\left(\overline{\bx}_{ij}-\bm{x}_i\right) &  \textrm{if }i \in J_j,
\vspace{0.1cm}
\\
\bO & \textrm{otherwise},
\end{array}
\right.
\\
\nonumber
\frac{\partial m_j}{\partial w_i}
& =
\left\{
\begin{array}{cl}
\displaystyle \sum_{k\in J_j}\frac{m_{jk}}{2d_{jk}} & \textrm{if } i=j,
\vspace{0.1cm}
\\
\displaystyle -\frac{m_{ij}}{2d_{ij}} & \textrm{if } i \in J_j,
\vspace{0.1cm}
\\
0 & \textrm{otherwise},
\end{array}
\right.
\end{align}
for $i \in \{ 1, \ldots, N \}$.
In particular, the $N$-by-$N$ matrix $\nabla_{\bw} \bom$, which has components $[\nabla_{\bw} \bom]_{ij}=\partial m_j / \partial w_i$, is the weighted graph Laplacian matrix of $G(\bX,\bw)$ with respect to the weights $\frac{m_{ij}}{2 d_{ij}}$.  Therefore the null space of $\nabla_{\bw} \bom$ is one-dimensional and is spanned by $(1,1,\ldots,1) \in \mathbb{R}^N$.  Note that $(1,1,\ldots,1)$ also belongs to the null space of the $(Nd)$-by-$N$ matrix $\nabla_{\bX} \bom$, which has $d$-by-$1$ blocks $[\nabla_{\bX} \bom]_{ij}=\partial m_j/ \partial \bx_i$.
\end{lemma}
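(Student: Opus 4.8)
The plan is to differentiate $m_j = \int_{P_j(\bX,\bw)} \rho \, d\bx$ by observing that the dependence on $(\bX,\bw)$ enters \emph{only} through the moving domain $P_j$, and then to apply the transport (Reynolds) theorem to convert the domain derivative into a surface integral over $\partial P_j$. The boundary $\partial P_j$ decomposes into the internal faces $F_{jk}$, $k \in J_j$, together with portions lying on $\partial\Omega$. Under a perturbation of a single generator $(\bx_i,w_i)$ the pieces on $\partial\Omega$ do not move and so contribute nothing; only those internal faces whose supporting hyperplane depends on the perturbed generator carry a nonzero normal velocity. The genericity hypothesis (adjacent cells meet in a genuine $(d-1)$-dimensional face) guarantees that the combinatorial structure of the diagram, in particular the index sets $J_j$, is locally constant, so that $m_j$ is differentiable and the transport formula is valid.

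First I would record the geometry of a single face. The face $F_{jk}$ lies in the hyperplane $\{ g_{jk}=0 \}$, where $g_{jk}(\bx):=|\bx-\bx_j|^2-w_j-|\bx-\bx_k|^2+w_k$; its unit normal pointing out of $P_j$ is $\bn_{jk}=(\bx_k-\bx_j)/d_{jk}$, and $|\nabla_{\bx} g_{jk}|=2d_{jk}$. Perturbing a scalar parameter $p\in\{w_i\}$ (or moving $\bx_i$ in a direction $\be$), implicit differentiation of $g_{jk}(\bx)=0$ along the moving surface gives the outward normal velocity
\[
V_n = -\frac{1}{2d_{jk}}\,\frac{\partial g_{jk}}{\partial p},
\]
with $\partial g_{jk}/\partial p$ replaced by $\nabla_{\bx_i} g_{jk}\cdot\be$ for the directional derivative in $\bx_i$. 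The transport theorem then yields $\partial m_j/\partial p=\sum_{k\in J_j}\int_{F_{jk}}\rho\,V_n\,dS$, the faces on $\partial\Omega$ dropping out.

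The formulas follow by evaluating the elementary derivatives of $g_{jk}$ in each case: $\partial g_{jk}/\partial w_j=-1$, $\partial g_{jk}/\partial w_k=+1$, $\nabla_{\bx_j} g_{jk}=-2(\bx-\bx_j)$, $\nabla_{\bx_k} g_{jk}=2(\bx-\bx_k)$, all other derivatives vanishing. For $\partial m_j/\partial w_i$ the case $i=j$ picks up every face with $V_n=1/(2d_{jk})$, giving $\sum_{k\in J_j} m_{jk}/(2d_{jk})$; the case $i\in J_j$ involves only the single face $F_{ji}$ with $V_n=-1/(2d_{ji})$, giving $-m_{ij}/(2d_{ij})$; and all remaining $i$ give zero. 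For $\partial m_j/\partial\bx_i$ the same case split applies, and I would use the identity $\int_{F_{jk}}\rho(\bx-\bx_j)\,dS=m_{jk}(\overline{\bx}_{jk}-\bx_j)$, immediate from the definitions of $m_{jk}$ and $\overline{\bx}_{jk}$, to turn the face integrals into the stated vector expressions $\frac{m_{jk}}{d_{jk}}(\overline{\bx}_{jk}-\bx_j)$ and $-\frac{m_{ij}}{d_{ij}}(\overline{\bx}_{ij}-\bx_i)$.

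Finally, comparing the componentwise formula for $\partial m_j/\partial w_i$ with the definition of $L(G,u)$ shows at once that $\nabla_{\bw}\bom$ is the weighted graph Laplacian of $G(\bX,\bw)$ with edge weights $u_{ij}=m_{ij}/(2d_{ij})$; since $G$ is connected, its null space is spanned by $(1,\ldots,1)$. That $(1,\ldots,1)$ also lies in the null space of $\nabla_{\bX}\bom$ follows cleanly from conservation of total mass: $\sum_j m_j(\bX,\bw)=\int_\Omega\rho\,d\bx$ is constant, so differentiation gives $\sum_j \partial m_j/\partial\bx_i=\bO$ for each $i$ (equivalently, the term-by-term cancellation of the $i=j$ and $i\in J_j$ contributions, using the symmetry $i\in J_j\iff j\in J_i$). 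I expect the main obstacle to be the rigorous justification of the transport-theorem step: one must argue that the boundary motion is correctly captured by the normal velocities of the moving hyperplanes, that the lower-dimensional edges where faces meet contribute nothing because they have measure zero even though their extents change as neighbouring hyperplanes shift, and that under the genericity assumption the face set and neighbour relations do not jump under small perturbations, so that $m_j$ is genuinely differentiable. Everything after this reduces to the bookkeeping above.
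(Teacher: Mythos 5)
Your proposal is correct and follows essentially the same route as the paper: both differentiate $m_j$ as a domain integral via the Reynolds transport theorem, use the genericity assumption to keep the face combinatorics locally constant, and obtain the normal velocity of each face by implicitly differentiating the bisector equation $|\bx-\bx_j|^2 - w_j = |\bx-\bx_k|^2 - w_k$. The only cosmetic difference is that the paper perturbs all generators simultaneously along a flow map $\varphi^t$ and reads off the partial derivatives from a general direction $(\tilde{\bX},\tilde{\bw})$, whereas you perturb one parameter at a time via the level-set formula $V_n = -\partial_p g_{jk}/|\nabla_{\bx} g_{jk}|$; the underlying computation is identical.
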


\begin{proof}
 Given the power diagram $\{ P_j \}_{j=1}^N$ generated by $(\bX,\bw) \in \mathcal{G}^N$, let $\{ P_j^t \}_{j=1}^N$ be the power diagram generated by
 $(\bX^t,\bw^t):=(\bX + t \tilde{\bX},\bw + t \tilde{\bw})$ for some $ \tilde{\bX} \in (\mathbb{R}^d)^N$, $\tilde{\bw} \in \mathbb{R}^N$. For $t$ in a small enough neighbourhood of zero, this family of power diagrams has the same number of cells, and each cell has the same number of faces, as the power diagram generated by $(\bX,\bw)$ (this follows from the assumption that adjacent cells have a common face).
Let $\varphi^t : \Omega \to \Omega$ be any flow map with the properties that $\varphi^0$ is the identity map, $\varphi^t(\bX)=\bX^t$, $\varphi^t(P_j)=P_j^t$ for all $j$, and that $\varphi^t$ maps the faces of $P_j$ to the faces of $P_j^t$ for all $j$.
Fix $j$ and consider
\begin{equation}
\label{eqn:mt}
m_j(\bX^t,\bw^t) = \int_{P_j^t} \rho \, d \bx = \int_{\varphi^t(P_j)} \rho \, d \bx.
\end{equation}
Define $V(\bx) = \frac{d}{dt} \varphi^t(\bx) |_{t=0}$.
By the Reynolds Transport Theorem, differentiating \eqref{eqn:mt} with respect to $t$ and evaluating at $t=0$ gives
\begin{equation}
\label{eqn:deriv}
\sum_{i=1}^N \frac{\partial m_j}{\partial \bx_i} \cdot \tilde{\bx}_i + \frac{\partial m_j}{\partial w_i} \tilde{w}_i
 = \int_{\partial P_j} \rho \, V \cdot \bn \, dS
 = \sum_{k \in J_j} \int_{F_{jk}} \rho \, V \cdot \bn_{jk} \, dS.
\end{equation}
Now we compute $V \cdot \bn_{jk}$.
Choose a face $F_{jk}=P_j \cap P_k$ and some point $\bx \in F_{jk}$. Then $\bx^t := \varphi^t(\bx) \in F^t_{jk} = P_j^t \cap P_k^t$ and so it satisfies
\[
| \bx^t - \bx_j^t |^2 - w_j^t =  | \bx^t - \bx_k^t |^2 - w_k^t.
\]
Differentiating with respect to $t$ and setting $t=0$ gives
\begin{equation}
\label{eqn:bndry}
2 (\bx - \bx_j) \cdot (V(\bx)-\tilde{\bx}_j) - \tilde{w}_j =
2 (\bx - \bx_k) \cdot (V(\bx)-\tilde{\bx}_k) - \tilde{w}_k.
\end{equation}
Recall that $\bn_{jk} = (\bx_k - \bx_j)/d_{jk}$. Therefore rearranging \eqref{eqn:bndry} and dividing by $d_{jk}$ yields
\begin{equation}
\label{eq:Vdotn}
V(\bx) \cdot \bn_{jk} = \frac{ (\bx - \bx_j) \cdot \tilde{\bx}_j - (\bx - \bx_k) \cdot \tilde{\bx}_k}{d_{jk}} + \frac{\tilde{w}_j - \tilde{w}_k}{2 d_{jk}}.
\end{equation}
Substituting this into \eqref{eqn:deriv} and using \eqref{eqn:def2}$_2$ and \eqref{eqn:def3}$_2$ gives
\[
\sum_{i=1}^N \frac{\partial m_j}{\partial \bx_i} \cdot \tilde{\bx}_i + \frac{\partial m_j}{\partial w_i} \tilde{w}_i
= \sum_{k \in J_j}
\frac{ m_{jk}}{d_{jk}} [(\overline{\bx}_{jk} - \bx_j) \cdot \tilde{\bx}_j - (\overline{\bx}_{jk} - \bx_k) \cdot \tilde{\bx}_k ]
+ \frac{m_{jk}}{2 d_{jk}}(\tilde{w}_j - \tilde{w}_k).
\]
The derivatives in Lemma \ref{lem:Dm} can be read off from this equation by making suitable choices of $(\tilde{\bX},\tilde{\bw})$.
\end{proof}

\begin{remark}
\upshape
The fact that $(1,1,\ldots,1)\in \mathbb{R}^N$ belongs to the null space of the matrix $\nabla_{\bw} \bom$ corresponds to the fact that the power diagram has fixed total mass and that it is invariant under the addition of a constant to all its weights:
\begin{equation}
\sum_j m_j = \int_\Omega \rho(\bx) \, d \bx,  \qquad m_j (\bX,\bw + (c,c,\ldots,c)) = m_j (\bX,\bw).
\end{equation}
Differentiating the first equation with respect to $w_i$ gives $\sum_j \partial m_j/\partial w_i = 0$ for all $i$, and so $(1,1,\ldots,1)$ belongs to the null space of $\nabla_{\bw} \bom$.
Differentiating the second equation with respect to
$c$ and then setting $c=0$ gives $\sum_i \partial m_j / \partial w_i = 0$ for all $j$, and so $(1,1,\ldots,1)$ belongs to the null space of $(\nabla_{\bw} \bom)^T$ (which equals $\nabla_{\bw} \bom$ since $\nabla_{\bw} \bom$ is symmetric).
\end{remark}

The main result of this section is the following:
\begin{proposition}[Critical points of $E$ are fixed points of the Lloyd maps]
\label{prop:critE}
Let $(\bX,\bw) \in \mathcal{G}^N$ be a critical point of $E$. Then, up to the addition of a constant to the weights, $(\bX,\bw)$ is a fixed point of the Lloyd maps $\bxi$ and $\bomega$:
\begin{equation}
\label{eq:fp}
\bxi ( \bX,\bw) = \bX, \quad \bomega(\bX,\bw) = \bw + \bc
\end{equation}
where $\bc = c (1,1,\ldots,1) \in \mathbb{R}^N$.
In particular, critical points of $E$ are centroidal power diagrams.
\end{proposition}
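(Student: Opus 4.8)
The plan is to read off the critical-point condition from the matrix form of $\nabla E$ in Lemma~\ref{lem:DE}, and then exploit the null-space structure of $\nabla_{\bw} \bom$ and $\nabla_{\bX} \bom$ established in Lemma~\ref{lem:Dm} to decouple the resulting system. Setting $\nabla E(\bX,\bw) = \bO$ in \eqref{eq:mform} produces the coupled system
\[
2 \hat{\bM}\,(\bX - \bxi(\bX,\bw)) + \nabla_{\bX} \bom\,(\bw - \bomega(\bX,\bw)) = \bO, \qquad \nabla_{\bw} \bom\,(\bw - \bomega(\bX,\bw)) = \bO.
\]
The idea is that the second equation pins down $\bw - \bomega$ up to a constant, after which the first equation collapses onto the centroid condition.

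I would begin with the second ($\bw$) equation. Since $(\bX,\bw) \in \mathcal{G}^N$ has no empty cells and (by the generic hypothesis) adjacent cells share a common face, Lemma~\ref{lem:Dm} applies: the matrix $\nabla_{\bw} \bom$ is the weighted graph Laplacian of the connected neighbour graph, so its null space is exactly the span of $(1,\ldots,1)$. Hence $\bw - \bomega(\bX,\bw)$ is a scalar multiple of $(1,\ldots,1)$, say $\bw - \bomega(\bX,\bw) = -\bc$ with $\bc = c(1,\ldots,1)$, which is precisely the weight fixed-point relation $\bomega(\bX,\bw) = \bw + \bc$ in \eqref{eq:fp}.

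Next I would substitute this back into the first ($\bX$) equation. The cross term becomes $\nabla_{\bX} \bom\,(-\bc) = -c\,\nabla_{\bX} \bom\,(1,\ldots,1)$, and by the final assertion of Lemma~\ref{lem:Dm} the vector $(1,\ldots,1)$ also lies in the null space of $\nabla_{\bX} \bom$, so this term vanishes. The first equation therefore reduces to $2 \hat{\bM}\,(\bX - \bxi(\bX,\bw)) = \bO$. Because $\mathcal{G}^N$ excludes empty cells, $m_i > 0$ for all $i$, so $\hat{\bM} = \mathrm{diag}(m_1 \bI_d,\ldots,m_N \bI_d)$ is invertible; multiplying through gives $\bxi(\bX,\bw) = \bX$, the remaining relation in \eqref{eq:fp}. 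Since $\bxi_i$ is by definition the centroid of $P_i$, this says exactly that the power diagram is centroidal.

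I expect the only genuine subtlety to be the vanishing of the cross term: it is what decouples the two equations and relies crucially on the fact, recorded in Lemma~\ref{lem:Dm}, that $(1,\ldots,1)$ annihilates $\nabla_{\bX} \bom$ (a reflection of the translation-invariance of the cell masses under a uniform shift of the weights). Everything else is routine once Lemmas~\ref{lem:DE} and~\ref{lem:Dm} are in hand, with the positivity $m_i > 0$ needed to invert $\hat{\bM}$ guaranteed directly by the definition \eqref{eqn:G} of $\mathcal{G}^N$.
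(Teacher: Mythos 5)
Your proposal is correct and follows essentially the same route as the paper: both use the gradient formulas of Lemma~\ref{lem:DE}, deduce $\bomega(\bX,\bw)=\bw+\bc$ from the graph-Laplacian null-space structure of $\nabla_{\bw}\bom$ in Lemma~\ref{lem:Dm}, kill the cross term via $(1,\ldots,1)\in\ker\nabla_{\bX}\bom$, and invert $\hat{\bM}$ using $m_i>0$ (no empty cells in $\mathcal{G}^N$). The only cosmetic difference is that the paper works componentwise with \eqref{eqn:E_x}--\eqref{eqn:E_w} rather than with the block matrix form \eqref{eq:mform}.
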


\begin{proof}
Equation \eqref{eqn:E_w} yields
\[
\bO = \nabla_{\bw} E = \nabla_{\bw} \bom (\bw-\bomega(\bX,\bw)).
\]
 By Lemma \ref{lem:Dm}, $\bomega(\bX,\bw) = \bw + \bc$ for some $\bc = c (1,1,\ldots,1) \in \mathbb{R}^N$.
 Since $\bc$ belongs to the null space of $\nabla_{\bX} \bom$, then equation \eqref{eqn:E_x} implies that
\begin{equation}
\label{eq:CP}
\bO = \frac{\partial E}{\partial \bx_i}(\bX,\bw) = 2 m_i (\bx_i - \bxi_i(\bX,\bw)).
\end{equation}
 By assumption the power diagram generated by $(\bX,\bw)$ has no empty cells. Therefore $m_i \ne 0$ for any $i$ and equation \eqref{eq:CP} gives $\bX -\bxi ( \bX,\bw) = \bO$, as required.
\end{proof}

\begin{remark}[Examples of critical points of $E$]
\upshape
Any centroidal Voronoi tessellation of $\Omega$ with the property that all cells have the same mass is a critical point of $E$. If $\rho=$ constant and $\Omega$ is a domain with nice symmetry, e.g., a square or a disc, then it is easy to write down lots, in fact infinitely many, centroidal Voronoi tessellations with this property and hence find infinitely many critical points of $E$ (although not all will be local minima). The highly non-convex nature of the energy landscape makes it difficult to find global minima. See \S\ref{subsec:nonconvex}.
\end{remark}


\section{Properties of the algorithm}
\label{sec:prop}

Our main result is the following:

\begin{theorem}
\label{thm:energyDecrease}
The generalized Lloyd algorithm is energy decreasing:
\begin{equation}
\nonumber
E( \bX^{n+1}, \bw^{n+1} ) \le E( \bX^{n}, \bw^{n} )
\end{equation}
where $\bX^{n+1}=\bxi \left( \bX^n, \bw^n\right)$, $\bw^{n+1}=\bomega \left( \bX^n, \bw^n\right)$, $(\bX^n,\bw^n)\in\mathcal{G}^N$.
The inequality is strict unless $(\bX^{n+1}, \bw^{n+1}) = (\bX^{n+2}, \bw^{n+2})$, i.e., unless the algorithm has converged.
\end{theorem}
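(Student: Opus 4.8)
The plan is to use the helper function $H$ and the four properties collected in Lemma \ref{lemma:H} as a majorize--minimize device. Starting from the identity $E(\bX^n,\bw^n)=H\big((\bX^n,\bw^n),(\bX^n,\bw^n),\bom(\bX^n,\bw^n)\big)$ of \eqref{eqn:E=H}, I will update the three groups of arguments one at a time, each update leaving $H$ unchanged or decreasing it, until I reach $E(\bX^{n+1},\bw^{n+1})$. With $\bX^{n+1}=\bxi(\bX^n,\bw^n)$ and $\bw^{n+1}=\bomega(\bX^n,\bw^n)$, the chain I aim to establish is
\begin{align*}
E(\bX^n,\bw^n)
&= H\big((\bX^n,\bw^n),(\bX^n,\bw^n),\bom(\bX^n,\bw^n)\big) \\
&= H\big((\bX^n,\bw^{n+1}),(\bX^n,\bw^n),\bom(\bX^n,\bw^n)\big) \\
&\ge H\big((\bX^{n+1},\bw^{n+1}),(\bX^n,\bw^n),\bom(\bX^n,\bw^n)\big) \\
&\ge H\big((\bX^{n+1},\bw^{n+1}),(\bX^n,\bw^n),\bom(\bX^{n+1},\bw^{n+1})\big) \\
&\ge H\big((\bX^{n+1},\bw^{n+1}),(\bX^{n+1},\bw^{n+1}),\bom(\bX^{n+1},\bw^{n+1})\big)
= E(\bX^{n+1},\bw^{n+1}).
\end{align*}

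Each step is justified by one part of Lemma \ref{lemma:H}. The first equality is \eqref{eqn:E=H}. The second is Lemma \ref{lemma:H}(ii), which says that $H\big((\bX^n,\bw^1),(\bX^n,\bw^n),\bom(\bX^n,\bw^n)\big)$ is independent of $\bw^1$, so I may insert the updated weight $\bw^{n+1}$. The first inequality is Lemma \ref{lemma:H}(i): minimizing $H$ over the position in its first slot is attained at $\bxi(\bX^n,\bw^n)=\bX^{n+1}$, so replacing $\bX^n$ by $\bX^{n+1}$ cannot increase $H$; here I use that $\Omega$ is convex, which guarantees $\bX^{n+1}\in\Omega^N$. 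The second inequality is Lemma \ref{lemma:H}(iv): since the first-slot weight now equals $\bomega(\bX^n,\bw^n)$, the mass vector $\bom(\bX^n,\bw^n)$ maximizes $H$ over $\bM$, so passing to $\bM=\bom(\bX^{n+1},\bw^{n+1})$ cannot increase $H$. The final inequality is Lemma \ref{lemma:H}(iii), which lets me replace the generating diagram $(\bX^n,\bw^n)$ in the second slot by $(\bX^{n+1},\bw^{n+1})$, and the closing equality is again \eqref{eqn:E=H}.

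For strictness I argue the contrapositive: assuming $E(\bX^n,\bw^n)=E(\bX^{n+1},\bw^{n+1})$ forces all three inequalities to be equalities, and I show this implies $(\bX^{n+2},\bw^{n+2})=(\bX^{n+1},\bw^{n+1})$, where $(\bX^{n+2},\bw^{n+2})=\big(\bxi(\bX^{n+1},\bw^{n+1}),\bomega(\bX^{n+1},\bw^{n+1})\big)$ is the next iterate. Equality in the Lemma \ref{lemma:H}(i) step forces $\bX^n$ to minimize a function of the first-slot position whose Hessian blocks are $2m_i(\bX^n,\bw^n)\bI_d$; since $(\bX^n,\bw^n)\in\mathcal{G}^N$ has no empty cells, every $m_i>0$, the function is strictly convex, and therefore $\bX^n=\bX^{n+1}$. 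Equality in the Lemma \ref{lemma:H}(iii) step forces $P_i(\bX^{n+1},\bw^{n+1})=P_i(\bX^n,\bw^n)$ for all $i$, so the masses $m_i$ and centroids $\overline{\bx}_i$ of the two diagrams coincide. Substituting these two facts into the definitions \eqref{eqn:lloydmap} gives $\bxi(\bX^{n+1},\bw^{n+1})=\bxi(\bX^n,\bw^n)=\bX^{n+1}$ and, for each $i$, $\omega_i(\bX^{n+1},\bw^{n+1})=-f'(m_i(\bX^{n+1},\bw^{n+1}))=-f'(m_i(\bX^n,\bw^n))=\omega_i(\bX^n,\bw^n)=w_i^{n+1}$, i.e.\ $(\bX^{n+2},\bw^{n+2})=(\bX^{n+1},\bw^{n+1})$, which is exactly convergence.

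I expect the energy-decrease chain to be essentially mechanical once Lemma \ref{lemma:H} is in hand, so the main obstacle is the strictness/convergence statement. This requires carefully tracking which inequalities must collapse to equalities, using strict convexity (and hence the no-empty-cell hypothesis $(\bX^n,\bw^n)\in\mathcal{G}^N$) to pin the positions down, and then running the short direct computation above to verify that a stalled step is genuinely a fixed point of the Lloyd maps.
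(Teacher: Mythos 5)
Your proposal is correct and takes essentially the same approach as the paper: the identical majorize--minimize chain built from \eqref{eqn:E=H} and Lemma \ref{lemma:H} (you apply (i) before (iv) while the paper applies (iv) before (i), but both orderings are valid since (iv) only requires the first-slot weight to equal $\bomega(\bX^n,\bw^n)$, which holds in both chains), followed by the same strictness argument in which equality in the Lemma \ref{lemma:H}(iii) step forces $P_i(\bX^{n+1},\bw^{n+1})=P_i(\bX^n,\bw^n)$ for all $i$ and hence $(\bX^{n+2},\bw^{n+2})=(\bX^{n+1},\bw^{n+1})$. Your additional use of strict convexity in the (i) step to conclude $\bX^n=\bX^{n+1}$ is harmless but unnecessary, since cell equality alone already determines the next iterate, exactly as in the paper's proof.
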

\begin{proof}
The proof follows easily by stringing together the properties of $H$ from Lemma \ref{lemma:H}:
\begin{align*}
& E \left( \bX^{n}, \bw^{n}  \right)
\\
& = H \left( \left( \bX^{n}, \bw^{n+1} \right) , \left( \bX^{n}, \bw^{n} \right)  , \bom \left( \bX^{n}, \bw^{n} \right) \right)
& \textrm{(by Lemma \ref{lemma:H}(ii))}
\\
& = H \left( \left( \bX^{n},\bomega \left( \bX^{n}, \bw^{n} \right) \right) ,  \left( \bX^{n}, \bw^{n} \right) , \bom \left( \bX^{n}, \bw^{n} \right) \right)
& \textrm{(by definition of } \bw^{n+1} \textrm{)}
\\
& \ge H \left( \left( \bX^{n},\bomega \left( \bX^{n}, \bw^{n} \right) \right) ,  \left( \bX^{n}, \bw^{n} \right) , \bom \left( \bX^{n+1}, \bw^{n+1} \right) \right)
& \textrm{(by Lemma \ref{lemma:H}(iv))}
\\
& = H \left( \left( \bX^{n},\bw^{n+1} \right) ,  \left( \bX^{n}, \bw^{n} \right) , \bom \left( \bX^{n+1}, \bw^{n+1} \right) \right)
& \textrm{(by definition of } \bw^{n+1} \textrm{)}
\\
& \ge H \left( \left( \bxi \left( \bX^{n},\bw^{n} \right),\bw^{n+1} \right) ,  \left( \bX^{n}, \bw^{n} \right) , \bom \left( \bX^{n+1}, \bw^{n+1} \right) \right)
& \textrm{(by Lemma \ref{lemma:H}(i))}
\\
& = H \left( \left( \bX^{n+1},\bw^{n+1} \right) ,  \left( \bX^{n}, \bw^{n} \right) , \bom \left( \bX^{n+1}, \bw^{n+1} \right) \right)
& \textrm{(by definition of } \bX^{n+1} \textrm{)}
\\
& \ge H \left( \left( \bX^{n+1},\bw^{n+1} \right) ,  \left( \bX^{n+1}, \bw^{n+1} \right) , \bom \left( \bX^{n+1}, \bw^{n+1} \right) \right)
& \textrm{(by Lemma \ref{lemma:H}(iii))}
\\
& = E \left( \bX^{n+1}, \bw^{n+1} \right)
& \textrm{(by equation \eqref{eqn:E=H})}.
\end{align*}
By Lemma \ref{lemma:H}(iii) the last inequality is strict unless $P_i \left( \bX^{n+1}, \bw^{n+1} \right)= P_i \left( \bX^{n}, \bw^{n} \right)$ for all
$i$, up to sets of $\rho \, d \bx$--measure zero, in which case $x_i^{n+2}$ (which is the centroid of $P_i( \bX^{n+1}, \bw^{n+1})$) equals $x_i^{n+1}$ (which is the centroid of $P_i( \bX^{n}, \bw^{n})$)
and
\begin{equation}
\nonumber
w_i^{n+2}=-f'(|P_i( \bX^{n+1}, \bw^{n+1})|) = -f'(|P_i \left( \bX^{n}, \bw^{n} \right)|) = w_i^{n+1}
\end{equation}
 as required.
\end{proof}

\begin{remark}[Elimination of generators is energy decreasing]
\upshape
The generalized Lloyd algorithm removes generators corresponding to empty cells, i.e., if $P^n_i=\emptyset$, then
the generator pair $(\bx^n_i,w^n_i)$ is removed in Step (2) of Algorithm \ref{algo:genLloyd}. The assumption that $f(0) \ge 0$ ensures that removing generators is energy decreasing.
\end{remark}

Recall from equation \eqref{eqn:G} that $\mathcal{G}^N$ is the set of $N$ generators such that no two generators
coincide and that the corresponding power diagram has no empty cells.
The energy-decreasing property of the algorithm can be used to prove the following convergence result,
which is a generalization of convergence theorem for the classical Lloyd algorithm \cite[Thm.~2.6]{Du2006}:
\begin{theorem}[Convergence of the generalized Lloyd algorithm]
\label{thm:conv}
Assume that $E$ has only finitely many critical points with the same energy.
Let $(\bX^k,\bw^k)$ be a sequence generated by Algorithm \ref{algo:genLloyd}. Let $K$ be large enough such that, for all $k \ge K$,
$(\bX^k,\bw^k) \in \mathcal{G}^N$ for $N$ fixed, i.e., there is no elimination of generators after iteration $K$.
If the sequence $(\bX^k,\bw^k)_{k > K}$ is a compact subset of $\mathcal{G}^N$, then it converges to a critical point of $E$.
\end{theorem}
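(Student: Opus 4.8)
The plan is to treat the energy $E$ as a Lyapunov function for the iteration and run a LaSalle-type argument, the one twist being the two-step nature of the strict-decrease statement in Theorem \ref{thm:energyDecrease}. Write $T := (\bxi,\bomega)$ for the Lloyd map, so that $(\bX^{k+1},\bw^{k+1}) = T(\bX^k,\bw^k)$, and let $\mathcal{C} \subset \mathcal{G}^N$ be the compact set containing the tail $(\bX^k,\bw^k)_{k>K}$. The first task — and, I expect, the main technical obstacle — is the regularity input that lets us pass to limits: that $E$ and $T$ are continuous on $\mathcal{C}$ and that the cell masses $m_i$ are bounded below by a positive constant there. Continuity rests on the stability of the power-diagram geometry under perturbations of $(\bX,\bw)$ away from the degenerate configurations excluded by $\mathcal{G}^N$ (coincident generators, empty cells); the lower mass bound then follows since $\bom$ is continuous and $\mathcal{C}$ is compact, and it is what makes the centroid map $\bxi_i = m_i^{-1}\int_{P_i}\bx\rho$ well defined and continuous. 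Granting this, Theorem \ref{thm:energyDecrease} gives that $E(\bX^k,\bw^k)$ is non-increasing, while continuity on the compact set $\mathcal{C}$ bounds it below, so $E(\bX^k,\bw^k)\downarrow E^*$ for some $E^*$.

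Next I would show that every accumulation point of the tail sequence is a fixed point of $T$. Let $A \subseteq \mathcal{C}$ be the nonempty, compact set of accumulation points. Fix $z^* \in A$ and a subsequence $(\bX^{k_j},\bw^{k_j}) \to z^*$ with $k_j - 1 \ge K$; passing to a further subsequence, $(\bX^{k_j-1},\bw^{k_j-1}) \to z^{**} \in A$. Since each iterate is the image of the previous one under $T$, continuity of $T$ gives $z^* = \lim T(\bX^{k_j-1},\bw^{k_j-1}) = T(z^{**})$. Continuity of $E$ and $E(\bX^k,\bw^k) \to E^*$ give $E(z^{**}) = E^* = E(T(z^{**}))$, so the strict-decrease alternative of Theorem \ref{thm:energyDecrease} must fail at $z^{**}$; that is, $T(z^{**})$ is a fixed point of $T$. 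Hence $z^* = T(z^{**}) \in \mathrm{Fix}(T)$. This ``pull back one step'' device is exactly what handles the subtlety that equality in the energy decrease pins down $T(z^{**})$, rather than $z^{**}$ itself, as a fixed point. By Proposition \ref{prop:critE} every fixed point of $T$ is a critical point of $E$, and all points of $A$ share the energy $E^*$; the hypothesis that $E$ has only finitely many critical points of a given energy then forces $A$ to be finite.

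Finally I would upgrade finiteness of $A$ to convergence. First, $\|(\bX^{k+1},\bw^{k+1}) - (\bX^k,\bw^k)\| \to 0$: otherwise some subsequence stays at least $\varepsilon$ apart, and extracting a convergent sub-subsequence $(\bX^{k_j},\bw^{k_j}) \to z^* \in A \subseteq \mathrm{Fix}(T)$, continuity of $T$ gives $(\bX^{k_j+1},\bw^{k_j+1}) = T(\bX^{k_j},\bw^{k_j}) \to T(z^*) = z^*$, contradicting the separation. Since $A$ is finite its points are separated by some $2\eta > 0$, while $\mathrm{dist}((\bX^k,\bw^k),A) \to 0$; once the consecutive steps are smaller than $\eta$ the sequence can no longer pass from the neighbourhood of one limit point to that of another, so it remains near a single $z^\infty \in A$ and converges to it. By Proposition \ref{prop:critE}, $z^\infty \in \mathrm{Fix}(T)$ is a critical point of $E$, which completes the proof.
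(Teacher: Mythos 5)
Your proposal is correct and follows essentially the same route as the paper's proof: continuity of $E$ and of the Lloyd maps on the compact tail, the pull-back-one-step device combined with the strict-decrease alternative of Theorem \ref{thm:energyDecrease} to show every accumulation point is a fixed point, and then finiteness of the accumulation set from the hypothesis on critical points with equal energy. The only difference is at the final step, where the paper cites \cite[Thm.~2.5]{Du2006} for the fact that a fixed-point iteration with finitely many accumulation points (all fixed points of the continuous map) must converge, whereas you prove this directly via the vanishing-step-size and separation argument --- a harmless, self-contained substitute.
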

\begin{proof}
This follows by combining a minor modification of the proof of the Global Convergence Theorem from \cite[p.~206]{LuenbergerYe} with a convergence theorem for the classical Lloyd algorithm \cite[Thm.~2.5]{Du2006}.
Note that the Lloyd maps $\bxi_i$, $\bomega_i$ and the energy $E$ are continuous on $\mathcal{G}^N$ by the continuity of the mass and first and second moments of mass of the power cells $P_i$,
and the continuity of $f$.

Let $(\bX^{k_j},\bw^{k_j})$ be a convergent subsequence converging to $(\bX,\bw)\in \mathcal{G}^N$. By the continuity of $E$ on $\mathcal{G}^N$, $E(\bX^{k_j},\bw^{k_j}) \to E(\bX,\bw)$. Take $J$ large enough so that $E(\bX^{k_J},\bw^{k_J}) - E(\bX,\bw) < \varepsilon$.
By Theorem \ref{thm:energyDecrease} the whole sequence $E(\bX^{k},\bw^{k})$ converges to $E(\bX,\bw)$ since for all $k > k_J$
\[
0 \le E(\bX^k,\bw^k) - E(\bX,\bw)
\le  E(\bX^k,\bw^k) - E(\bX^{k_J},\bw^{k_J}) + E(\bX^{k_J},\bw^{k_J}) - E(\bX,\bw) < \varepsilon.
\]

Next we check that $(\bX,\bw)$ is a fixed point of the Lloyd maps and hence a critical point of $E$. Consider the sequence
$(\bX^{k_j-1},\bw^{k_j-1})$. By the compactness of $(\bX^{k},\bw^{k})$ there is a subsequence $(\bX^{k_{j_l}-1},\bw^{k_{j_l}-1})$
converging to $(\bX_-,\bw_-) \in \mathcal{G}^N$. The continuity of the Lloyd maps on $\mathcal{G}^N$ implies that
\[
(\bxi(\bX^{k_{j_l}-1},\bw^{k_{j_l}-1}),\bomega(\bX^{k_{j_l}-1},\bw^{k_{j_l}-1}))=(\bX^{k_{j_l}},\bw^{k_{j_l}}) \to
(\bxi(\bX_-,\bw_-),\bomega(\bX_-,\bw_-)).
\]
But $(\bX^{k_{j_l}},\bw^{k_{j_l}}) \to (\bX,\bw)$. Therefore
$(\bxi(\bX_-,\bw_-),\bomega(\bX_-,\bw_-)) = (\bX,\bw)$.
Since $E(\bX^{k},\bw^{k}) \to E(\bX,\bw)$, we obtain that
\[
E(\bX_-,\bw_-) = E(\bX,\bw) = E(\bxi(\bX_-,\bw_-),\bomega(\bX_-,\bw_-))
\]
and thus, by Theorem \ref{thm:energyDecrease}, $(\bxi(\bX_-,\bw_-),\bomega(\bX_-,\bw_-))=(\bX,\bw)$
is a fixed point of the Lloyd maps.

We have shown that any accumulation point of $(\bX^k,\bw^k)$ is a fixed point of the Lloyd maps and, by the energy-decreasing property of the algorithm,
all accumulation points have the same energy. Therefore, by the first assumption of the theorem, it follows that
$(\bX^k,\bw^k)$ has only finitely many accumulation points.

Finally, the whole sequence $(\bX^k,\bw^k)$ converges to $(\bX,\bw)$ by the following result, which is proved in \cite[Thm.~2.5]{Du2006}
for the classical Lloyd algorithm but holds for general fixed point methods of the form $z^{k+1} = T(z^k)$: If the sequence $\{z^k\}$ generated by $z^{k+1} = T(z^k)$ has finitely many accumulation points, $T$ is continuous
at them, and they are fixed points of $T$, then $z^k$ converges. This completes the proof.
\end{proof}
\begin{remark}[Assumptions of the convergence theorem]
\upshape
The assumption that $E$ has only finitely many critical points with the same energy is true for generic domains $\Omega$
but not for all, e.g., if $\Omega$ is a ball and $\rho$ is radially symmetric then there could be infinitely many fixed points
with the same energy by rotational symmetry. The assumption that $(\bX^k,\bw^k)_{k > K}$ is a compact subset of $\mathcal{G}^N$ is stronger.
It means that in the limit there is no elimination of generators.
We need this assumption since the Lloyd maps are not defined if there are empty cells, $P_i = \emptyset$ for some $i$.
While numerical experiments suggest that cells do not
disappear in the limit, it is difficult to prove, even for the classical Lloyd algorithm; it was proved in one-dimension
by \cite[Prop.~2.9]{Du2006}. For further convergence theorems for the classical Lloyd algorithm see \cite{Du1999} and \cite{SabinGray}.
\end{remark}

\begin{remark}[Interpretation of the Lloyd algorithm as a descent method]
\upshape
In the following proposition we study the structure of the generalized Lloyd algorithm. Recall that an iterative method is a
descent method for an energy $\mathcal{E}$ if it can be written in the form
\begin{equation}
\label{eqn:descent}
\bz_{n+1} = \bz_n - \alpha_n \bB_n \nabla \mathcal{E}
\end{equation}
where $\bB_n$ is positive-definite, $\alpha_n$ is the step size, and $-\bB_n \nabla \mathcal{E}$ is the step direction, e.g., $\bB_n = \bI$ is the steepest descent method, $B_n = (D^2\mathcal{E})^{-1}$, $\alpha_n=1$ is Newton's method.
The following proposition asserts that the generalized Lloyd algorithm can be written in the form \eqref{eqn:descent}, but not that
$\bB_n$ is positive-definite, which we are unable to prove:
\end{remark}

\begin{proposition}
\label{prop:DM}
The generalized Lloyd algorithm can be written in the form
\begin{equation}
\label{eq:DM}
\begin{pmatrix}
\bX^{n+1} \\ \bw^{n+1}
\end{pmatrix}
=
\begin{pmatrix}
\bX^{n} \\ \bw^{n}
\end{pmatrix}
- \bB_n
\begin{pmatrix}
\nabla_{\bX} E^n \\ \nabla_{\bw} E^n
\end{pmatrix}
+
\begin{pmatrix}
\bO \\ \bc
\end{pmatrix}
\end{equation}
where $\bB_n$ is a square matrix of dimension $N(d+1)$ 
and $\bc = c (1,1,\ldots,1)^T$ for some $c \in \mathbb{R}$.
\end{proposition}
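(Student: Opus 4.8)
The plan is to read $\bB_n$ directly off the factorization of the gradient in Lemma \ref{lem:DE}. Writing $\bv^n := \big(\bX^n - \bxi(\bX^n,\bw^n),\, \bw^n - \bomega(\bX^n,\bw^n)\big)$, the Lloyd update is simply $(\bX^{n+1},\bw^{n+1}) - (\bX^n,\bw^n) = -\bv^n$, while equation \eqref{eq:mform} reads $\nabla E^n = A_n\, \bv^n$ with
\[
A_n = \begin{pmatrix} 2\hat{\bM} & \nabla_{\bX}\bom \\ \bO & \nabla_{\bw}\bom \end{pmatrix}.
\]
Thus proving \eqref{eq:DM} reduces to exhibiting a square matrix $\bB_n$ of size $N(d+1)$ with $\bB_n A_n \bv^n = \bv^n + (\bO,\bc)$ for some $\bc = c(1,\ldots,1)^T$. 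If $A_n$ were invertible I would just take $\bB_n = A_n^{-1}$ and $\bc = \bO$; the main obstacle is that $A_n$ is singular, since its lower-right block $\nabla_{\bw}\bom$ is the graph Laplacian $L_n$ of Lemma \ref{lem:Dm}, whose kernel is spanned by $\bm{1} := (1,\ldots,1)^T$. In fact the whole vector $(\bO,\bm{1})$ lies in the kernel of $A_n$, because by Lemma \ref{lem:Dm} the vector $\bm{1}$ annihilates $\nabla_{\bX}\bom$ as well.

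To sidestep the singularity I would invert the two diagonal blocks separately: $\hat{\bM} = \mathrm{diag}(m_1\bI_d,\ldots,m_N\bI_d)$ is invertible because $(\bX^n,\bw^n)\in\mathcal{G}^N$ forces every $m_i>0$, and I replace $L_n$ by its Moore--Penrose pseudo-inverse $L_n^+$. Exploiting the block-upper-triangular form of $A_n$, I propose
\[
\bB_n = \begin{pmatrix} \tfrac12\hat{\bM}^{-1} & -\tfrac12\hat{\bM}^{-1}(\nabla_{\bX}\bom)\,L_n^+ \\ \bO & L_n^+ \end{pmatrix}.
\]
A direct block multiplication should then give $\bB_n A_n = \mathrm{diag}(\bI_{Nd},\, L_n^+ L_n)$: the top-left block is $\bI_{Nd}$; the bottom-right block is the orthogonal projection $\Pi := L_n^+ L_n = \bI_N - \tfrac1N\bm{1}\bm{1}^T$ onto $\bm{1}^\perp$ (using that $L_n$ is symmetric, so its range is $\ker^\perp$); and the top-right block vanishes because the surviving factor $(\nabla_{\bX}\bom)(\bI_N - \Pi)$ is proportional to $(\nabla_{\bX}\bom)\bm{1} = \bO$ by Lemma \ref{lem:Dm}.

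Finally I would apply this to $\bv^n$. Since $\Pi$ fixes the $\bX$-component and subtracts the mean from the $\bw$-component, $\bB_n A_n \bv^n = \bv^n - (\bO,\, \bar v\,\bm{1})$ with $\bar v = \tfrac1N\sum_i\big(w_i^n - \omega_i(\bX^n,\bw^n)\big)$, whence $\bB_n \nabla E^n = \bv^n + (\bO,\bc)$ for $\bc = -\bar v\,\bm{1}$; rearranging gives exactly \eqref{eq:DM}. The conceptual point worth emphasizing is that the leftover kernel component is harmless precisely because the power diagram is invariant under adding a constant to all the weights, and this is what the free term $(\bO,\bc)$ in the statement encodes. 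I expect the only delicate bookkeeping to be the orientation of $\nabla_{\bX}\bom$ (an $(Nd)$-by-$N$ matrix of $d$-by-$1$ blocks) and keeping the two kernel facts of Lemma \ref{lem:Dm} straight; since the proposition deliberately does not assert that $\bB_n$ is positive-definite, no spectral analysis is needed.
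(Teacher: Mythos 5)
Your proof is correct and takes essentially the same approach as the paper: both read $\bB_n$ off the factorization \eqref{eq:mform}, invert the block $2\hat{\bM}$ directly, and handle the one-dimensional kernel of $\nabla_{\bw}\bom$ with a generalized inverse, arriving at the same block-upper-triangular $\bB_n$. The only difference is cosmetic: the paper constructs its generalized inverse by hand via the change-of-basis matrix $P$, the projection $\Pi$, and the reduced invertible matrix $A_n$, whereas you use the Moore--Penrose pseudo-inverse $L_n^+$ together with the standard fact that $L_n^+ L_n$ is the orthogonal projector onto $\mathrm{ker}(L_n)^\perp$, which streamlines the verification.
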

\begin{proof}
Recall that
\[
m_i^n = \int_{P_i(\bX^n,\bw^n)} \rho(\bx) \, d \bx.
\]
and $\hat{\bM}_n = \textrm{diag}(m_1^n \bI_{d},\ldots, m_N^n \bI_{d})$.
Equation \eqref{eq:mform} implies that
\begin{equation}
\label{eq:2invert}
\begin{pmatrix}
\nabla_{\bX} E^n \\ \nabla_{\bw} E^n
\end{pmatrix}
=
\begin{pmatrix}
2 \hat{\bM}_n & \nabla_{\bX} \bom^n \\
\bO & \nabla_{\bw} \bom^n
\end{pmatrix}
\begin{pmatrix}
\bX^n - \bX^{n+1} \\
\bw^n - \bw^{n+1}
\end{pmatrix},
\end{equation}
where $\bO$ is the $N$-by-$(Nd)$ zero matrix.
By Lemma \ref{lem:Dm}, the matrix on the right-hand side has a one-dimensional nullspace. Therefore rewriting these equations in the form \eqref{eq:DM} requires some care.

Let $\be_1, \ldots, \be_N$ be the standard basis vectors for $\mathbb{R}^N$. We introduce the new basis
\[
\bof_1 := \be_1 - \be_2, \quad \bof_2 := \be_2 - \be_3, \quad \ldots \quad \bof_{N-1} := \be_{N-1} - \be_N, \quad \bof_N := \be_1 + \cdots + \be_N.
\]
Note that $\bof_N$ spans the null space of $\nabla_{\bw} \bom^n$. Let $P$ be the invertible change-of-basis matrix satisfying $P \bof_i = \be_i$. In particular
\[
P^{-1} =
\begin{pmatrix}
\phantom{-}1 & & & & & 1 \\
-1 & \phantom{-}1 & & & & 1 \\
& -1 & \phantom{-}1 & & & 1 \\
& & \ddots & \ddots & & \vdots \\
& & & \ddots & \ddots & \vdots \\
& & & & -1 & 1
\end{pmatrix}
\]
with zeros where no entry is given.
Let $\Pi:\mathbb{R}^N \to \mathbb{R}^{N-1}$ be the projection onto $\{\bof_N\}^\perp$:
\[
\Pi = ( \bI_{N-1} | \bO )
\]
where $\bO$ is the $(N-1)$-by-$1$ zero vector.
Observer that for all $\by \in \mathbb{R}^N$
\begin{equation}
\label{eq:mess}
\nabla_{\bw} \bom^n \, \by =  \nabla_{\bw} \bom^n \, P^{-1} \Pi^T \Pi P \, \by
\end{equation}
since $\nabla_{\bw} \bom^n \, \bof_N = \bO$, $\Pi P \, \bof_N = \bO$, and
$\Pi^T \Pi \, \be_i = \be_i$ for all $i \in \{ 1, \ldots, N-1 \}$.
We check that the following $(N-1)$-by-$(N-1)$ matrix is invertible:
\begin{equation}
\label{eq:An}
A_n := \Pi P \, \nabla_{\bw} \bom^n P^{-1} \Pi^T.
\end{equation}
If $A_n \bx = \bO$, then $P \, \nabla_{\bw} \bom^n P^{-1} \Pi^T \, \bx = c \,\be_N$ for some $c \in \mathbb{R}$, and so
$\nabla_{\bw} \bom^n P^{-1} \Pi^T \, \bx = c \bof_N$. But
$\bof_N^T \nabla_{\bw}\bom^n = (\nabla_{\bw}\bom^n \bof_N)^T = \bO$
and thus $c=0$.
Therefore, by Lemma \ref{lem:Dm}, $P^{-1} \Pi^T \, \bx = a \bof_N$ for some $a \in \mathbb{R}$. It follows from the definitions of $P$ and $\Pi$ that $a=0$ and $\bx=\bO$.

Using equations \eqref{eq:mess} and \eqref{eq:An}, we see that the equation
\[
\nabla_{\bw} E^n = \nabla_{\bw} \bom^n (\bw^n - \bw^{n+1})
\]
can be inverted to give
\[
A_n^{-1} \Pi P \, \nabla_{\bw} E^n = \Pi P (\bw^n - \bw^{n+1}).
\]
Therefore
\begin{equation}
\label{eq:wnp1}
\bw^{n+1} = \bw^n - P^{-1} \Pi^T A_n^{-1} \Pi P \, \nabla_{\bw} E^n + c \bof_N
\end{equation}
for some $c \in \mathbb{R}$.
We conclude from equations \eqref{eq:2invert} and \eqref{eq:wnp1} that
\[
\begin{pmatrix}
\bX^{n+1} \\ \bw^{n+1}
\end{pmatrix}
=
\begin{pmatrix}
\bX^{n} \\ \bw^{n}
\end{pmatrix}
- \bB_n
\begin{pmatrix}
\nabla_{\bX} E^n \\ \nabla_{\bw} E^n
\end{pmatrix}
+
\begin{pmatrix}
\bO \\ c \bof_N
\end{pmatrix}
\]
where $\bB_n$ is the matrix
\[
\bB_n =
\begin{pmatrix}
\frac12 \hat{\bM}_n^{-1} & -\frac 12 \hat{\bM}_n^{-1} \nabla_{\bX} \bom^n P^{-1} \Pi^T A_n^{-1} \Pi P \\
\bO & P^{-1} \Pi^T A_n^{-1} \Pi P
\end{pmatrix},
\]
where $\bO$ is the $N$-by-$(Nd)$ zero matrix. This completes the proof.
\end{proof}


\begin{remark}[Alternative algorithm]
\upshape
The following proposition gives explicit expressions for the derivatives of the Lloyd maps $\bxi$ and $\bomega$. These could
be used to find critical points of $E$ in an alternative way, e.g., by solving the nonlinear equations
\eqref{eq:fp} using Newton's method.
\end{remark}

\begin{proposition}[Derivatives of the Lloyd maps]
\label{prop:DLloyd}
Given a face $F$ of a power diagram, define the matrix $\mathcal{S}(F)$ by
\[
\mathcal{S}(F) = \frac{1}{m(F)} \int_F \bx \otimes \bx \, \rho(\bx) \, dS
\]
where $m(F)=\int_F \rho \, dS$ is the mass of the face.
Let $(\bX,\bw) \in \mathcal{G}^N$ be the generators of a power diagram with the generic property that adjacent cells have a common face (a common edge in 2D).
The derivatives of the Lloyd maps $\bxi(\bX,\bw)$ and $\bomega(\bX,\bw)$ are 
\[
\left( \frac{\partial \bxi}{\partial \bX} \right)_{ij} = \frac{\partial \bxi_i}{\partial \bx_j} =
\left\{
\begin{array}{cl}
\displaystyle
\frac{1}{m_i} \sum_{k \in J_i} \frac{m_{ik}}{d_{ik}}
(\mathcal{S}(F_{ik}) - \overline{\bx}_{ik} \otimes \bx_i + \overline{\bx}_i \otimes (\bx_i - \overline{\bx}_{ik}))
& \textrm{if } i=j,
\vspace{0.1cm}
\\
\displaystyle
- \frac{m_{ij}}{m_i d_{ij}} (\mathcal{S}(F_{ij}) - \overline{\bx}_{ij} \otimes \bx_j + \overline{\bx}_i \otimes (\bx_j - \overline{\bx}_{ij}))
& \textrm{if } j \in J_i,
\vspace{0.1cm}
\\
\bO & \textrm{otherwise},
\end{array}
\right.
\]
\[
\left( \frac{\partial \bxi}{\partial \bw} \right)_{ij} = \frac{\partial \bxi_i}{\partial w_j} =
\left\{
\begin{array}{cl}
\displaystyle
\frac{1}{2 m_i} \sum_{k \in J_i} \frac{m_{ik}}{d_{ik}} ( \overline{\bx}_{ik}-\overline{\bx}_i )
& \textrm{if } i=j,
\vspace{0.1cm}
\\
\displaystyle
- \frac{m_{ij}}{2 m_i d_{ij}} (\overline{\bx}_{ij}-\overline{\bx}_i ) & \textrm{if } j \in J_i,
\vspace{0.1cm}
\\
\bO & \textrm{otherwise},
\end{array}
\right.
\]
\[
\left( \frac{\partial \bomega}{\partial \bX} \right)_{ij} = \frac{\partial \omega_i}{\partial \bx_j} =
\left\{
\begin{array}{cl}
\displaystyle -f''(m_i) \sum_{k\in J_i}\frac{m_{ik}}{d_{ik}}\left(\overline{\bx}_{ik}-\bm{x}_i\right) & \textrm{if } i=j,
\vspace{0.1cm}
\\
\displaystyle f''(m_i) \frac{m_{ij}}{d_{ij}}\left(\overline{\bx}_{ij}-\bm{x}_i\right) &  \textrm{if }j \in J_i,
\vspace{0.1cm}
\\
\bO & \textrm{otherwise},
\end{array}
\right.
\]
\[
\left( \frac{\partial \bomega}{\partial \bw} \right)_{ij} = \frac{\partial \omega_i}{\partial w_j} =
\left\{
\begin{array}{cl}
\displaystyle -f''(m_i) \sum_{k\in J_i}\frac{m_{ik}}{2d_{ik}} & \textrm{if } i=j,
\vspace{0.1cm}
\\
\displaystyle f''(m_i)\frac{m_{ij}}{2d_{ij}} & \textrm{if } j \in J_i,
\vspace{0.1cm}
\\
0 & \textrm{otherwise}.
\end{array}
\right.
\]
We order the block matrices $\partial \bxi / \partial \bX$, $\partial \bxi / \partial \bw$, $\partial \bomega / \partial \bX$ and $\partial \bomega / \partial \bw$
so that they have dimensions $(Nd)$-by-$(Nd)$, $(Nd)$-by-$N$, $N$-by-$(Nd)$ and $N$-by-$N$.
\end{proposition}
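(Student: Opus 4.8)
The plan is to treat the two Lloyd maps separately, in each case reducing the computation to the moving-domain differentiation already carried out in Lemma \ref{lem:Dm} and differing from it only in the integrand. Throughout I work in the generic configuration assumed in the statement, where adjacent cells meet in a genuine face; this guarantees, exactly as in the proof of Lemma \ref{lem:Dm}, that the combinatorics of the power diagram is locally constant under perturbation of $(\bX,\bw)$, so that the Reynolds transport theorem applies.

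\textbf{The $\bomega$ derivatives are immediate.} Since $\omega_i(\bX,\bw) = -f'(m_i(\bX,\bw))$ depends on the generators only through the mass $m_i$, the chain rule gives
\[
\frac{\partial \omega_i}{\partial \bx_j} = -f''(m_i)\,\frac{\partial m_i}{\partial \bx_j},
\qquad
\frac{\partial \omega_i}{\partial w_j} = -f''(m_i)\,\frac{\partial m_i}{\partial w_j}.
\]
Substituting the expressions for $\partial m_i/\partial \bx_j$ and $\partial m_i/\partial w_j$ from Lemma \ref{lem:Dm} (with the roles of the indices exchanged) produces the two stated block formulas for $\partial\bomega/\partial\bX$ and $\partial\bomega/\partial\bw$, with no further work.

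\textbf{The $\bxi$ derivatives reduce to a first-moment derivative.} Writing the (unnormalised) first moment of the cell as $\bm{q}_i := \int_{P_i(\bX,\bw)}\bx\,\rho(\bx)\,d\bx$, we have $\bxi_i = \bm{q}_i/m_i$ and $\overline{\bx}_i = \bxi_i$. The quotient rule then gives, for any of the generator variables $y \in \{\bx_j, w_j\}$,
\[
\frac{\partial \bxi_i}{\partial y} = \frac{1}{m_i}\left( \frac{\partial \bm{q}_i}{\partial y} - \overline{\bx}_i \otimes \frac{\partial m_i}{\partial y} \right),
\]
where $\otimes$ is interpreted as ordinary scalar multiplication when $y=w_j$. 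The mass derivatives $\partial m_i/\partial y$ are already known from Lemma \ref{lem:Dm}, so the only genuinely new object is $\partial \bm{q}_i/\partial y$. When $j\ne i$ and $j\notin J_i$, moving $(\bx_j,w_j)$ does not perturb $P_i$ to first order, so both $\partial\bm{q}_i/\partial y$ and $\partial m_i/\partial y$ vanish; this accounts for the ``otherwise'' cases.

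\textbf{Differentiating the first moment.} I would repeat the Reynolds transport argument of Lemma \ref{lem:Dm} verbatim, but with the time-independent vector-valued integrand $\bx\rho(\bx)$ in place of $\rho(\bx)$. Using the same flow map $\varphi^t$ and perturbation $(\tilde{\bX},\tilde{\bw})$, the transport theorem yields
\[
\sum_{i=1}^N \frac{\partial \bm{q}_j}{\partial \bx_i}\,\tilde{\bx}_i + \frac{\partial \bm{q}_j}{\partial w_i}\,\tilde{w}_i
= \sum_{k\in J_j}\int_{F_{jk}} \bx\,\rho(\bx)\,\big(V\cdot\bn_{jk}\big)\,dS,
\]
with the boundary velocity $V\cdot\bn_{jk}$ given by equation \eqref{eq:Vdotn}. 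Substituting \eqref{eq:Vdotn} and evaluating the face integrals, the product of the vector factor $\bx$ with the $\bx$-dependent part of $V\cdot\bn_{jk}$ produces the face second-moment matrix $\mathcal{S}(F_{jk})$, while the remaining pieces produce outer products of the face centroids $\overline{\bx}_{jk}$. Reading off the coefficients of $\tilde{\bx}_j,\tilde{\bx}_k,\tilde{w}_j,\tilde{w}_k$ gives $\partial\bm{q}_i/\partial\bx_j$ and $\partial\bm{q}_i/\partial w_j$; inserting these together with the mass derivatives into the quotient-rule formula above and simplifying with $\bm{q}_i/m_i=\overline{\bx}_i$ assembles the four stated expressions.

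\textbf{Main obstacle.} The analytic content is identical to Lemma \ref{lem:Dm} and poses no new difficulty. The real work is purely algebraic bookkeeping: correctly tracking the tensor (outer-product) structure and the signs that arise when the vector integrand $\bx\rho$ is paired with the scalar velocity $V\cdot\bn_{jk}$, isolating the $\mathcal{S}(F_{jk})$ contribution from the centroid outer products, and verifying that the subtraction of $\overline{\bx}_i\otimes\partial m_i/\partial y$ collapses everything into the compact combinations $\mathcal{S}(F_{ij})-\overline{\bx}_{ij}\otimes\bx_j+\overline{\bx}_i\otimes(\bx_j-\overline{\bx}_{ij})$ and its diagonal and $w$-derivative analogues. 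This step is routine but error-prone, and is where all the care is required.
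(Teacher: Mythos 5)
Your proposal is correct and follows essentially the same route as the paper: the $\bomega$ derivatives come from the chain rule applied to $\omega_i=-f'(m_i)$ together with Lemma \ref{lem:Dm}, and the $\bxi$ derivatives come from the quotient rule for $\bxi_i = (m_i\bxi_i)/m_i$ combined with a Reynolds-transport computation of the first moment $\int_{P_i}\bx\,\rho(\bx)\,d\bx$ using the same flow map and the boundary-velocity formula \eqref{eq:Vdotn}. The paper likewise leaves the final tensor bookkeeping as a sketch, recording only the intermediate identity (its equation \eqref{eq:T}) that you describe in words.
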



\begin{proof}
Since $\omega_i = -f'(m_i)$, then the partial derivatives of $\bomega$ are obtained immediately from Lemma \ref{lem:Dm}.
Obtaining the partial derivatives of $\bxi = \tfrac{1}{m_i} \int_{P_i} \bx \rho \, dx$ requires a bit more work.
Observe that 
\begin{equation}
\label{eq:Dxi}
\frac{\partial \bxi_i}{\partial \bx_j}=\frac{1}{m_i}\left( \frac{\partial (m_i \bxi_i)}{\partial \bx_j}- \bxi_i \otimes \frac{\partial m_i}{\partial \bx_j}  \right),
\quad
\frac{\partial \bxi_i}{\partial w_j}=\frac{1}{m_i}\left( \frac{\partial (m_i \bxi_i)}{\partial w_j}- \frac{\partial m_i}{\partial w_j} \bxi_i \right).
\end{equation}
Lemma \ref{lem:Dm} gives $\partial m_i / \partial \bx_j$, $\partial m_i / \partial w_j$ and so we just need to compute
 $\partial(m_i \bxi_i)/\partial \bx_j$, $\partial(m_i \bxi_i)/\partial w_j$, i.e., compute the partial derivatives of
\[
(m_i \bxi_i)(\bX,\bw) = \int_{P_i(\bX,\bw)} \bx \rho(\bx) \, d \bx.
\]
The computation is similar to the proof of Lemma \ref{lem:Dm} and so we just sketch the details.
Consider the same 1-parameter family of power diagrams used in the proof of Lemma \ref{lem:Dm}: $\{ P_i^t \} = \{ \varphi^t(P_i) \}$. As
for equation \eqref{eqn:deriv},
\[
\left. \frac{d}{dt} \right|_{t=0} (m_i \bxi_i)(\bX^t,\bw^t)
= \sum_{j=1}^N \frac{\partial (m_i \bxi_i)}{\partial \bx_j} \tilde{\bx}_j + \frac{\partial (m_i \bxi_i)}{\partial w_j} \tilde{w}_j
= \sum_{k \in J_i} \int_{F_{ik}} \bx \rho(\bx) V \cdot \bn_{ik} \, d S
\]
where $V(\bx) = \frac{d}{dt} \varphi^t(\bx) |_{t=0}$.
Combining this with equation \eqref{eq:Vdotn} gives
\begin{align}
\nonumber
& \sum_{j=1}^N \frac{\partial (m_i \bxi_i)}{\partial \bx_j} \tilde{\bx}_j + \frac{\partial (m_i \bxi_i)}{\partial w_j} \tilde{w}_j
\\
\nonumber
& =
\sum_{k \in J_i} \int_{F_{ik}} \bx \rho(\bx)
\left[ \frac{ (\bx - \bx_i) \cdot \tilde{\bx}_i - (\bx - \bx_k) \cdot \tilde{\bx}_k}{d_{ik}} + \frac{\tilde{w}_i - \tilde{w}_k}{2 d_{ik}} \right]
\, dS
\\
\label{eq:T}
& = \sum_{k \in J_i} \frac{m_{ik}}{d_{ik}} \left[ (\mathcal{S}(F_{ik})-\overline{\bx}_{ik} \otimes \bx_i) \tilde{\bx}_i
- (\mathcal{S}(F_{ik})-\overline{\bx}_{ik} \otimes \bx_k) \tilde{\bx}_k + \frac{\overline{x}_{ik}(\tilde{w}_i - \tilde{w}_k)}{2}
\right]
\end{align}
where the matrix $\mathcal{S}(F_{ik})$ was defined in the statement of the proposition.
By combining equations \eqref{eq:Dxi} and \eqref{eq:T} (with suitable choices of $\tilde{\bX}$ and $\tilde{\bw}$) and Lemma \ref{lem:Dm} we obtain the desired expressions
for $\partial \bxi_i / \partial \bx_j$ and $\partial \bxi_i / \partial w_j$.
\end{proof}

Potentially these derivatives could also be used to prove convergence of the Lloyd algorithm by proving that the Lloyd map pair $(\bxi,\bomega):\mathcal{G}^N \to \mathcal{G}^N$ is a contraction. These derivatives are also needed to evaluate the Hessian of $E$, which can be used to check the stability of fixed points:

\begin{proposition}[The Hessian of $E$ evaluated at fixed points]
\label{prop:D2E}
If $(\bX,\bw)$ is a fixed point of the Lloyd maps $\bxi$ and $\bomega$, i.e., if it satisfies equation \eqref{eq:fp}, then the Hessian of $E$
evaluated at $(\bX,\bw)$ is
\[
\begin{pmatrix}
E_{\bX \bX} & E_{\bX \bw} \\ E_{\bw \bX} & E_{\bw \bw}
\end{pmatrix}
=
\begin{pmatrix}
2 \hat{\bM} & \nabla_{\bX} \bom \\
\bO & \nabla_{\bw} \bom
\end{pmatrix}
\begin{pmatrix}
\bI_{Nd} - \frac{\partial \bxi}{\partial \bX} & - \frac{\partial \bxi}{\partial \bw} \\
- \frac{\partial \bomega}{\partial \bX} & \bI_{N} - \frac{\partial \bomega}{\partial \bw}
\end{pmatrix},
\]
where $\bO$ is the $N$-by-$(Nd)$ zero matrix, $\hat{\bM}$ was defined in equation \eqref{eq:Mhat},
$E_{\bX \bX}$ is the $(Nd)$-by-$(Nd)$ block matrix with $d$-by-$d$ blocks $\partial^2 E/\partial \bx_i \partial \bx_j$,
$E_{\bw \bw}$ is the $N$-by-$N$ matrix with entries $[E_{\bw \bw}]_{ij}=\partial^2 E/\partial w_i \partial w_j$,
$E_{\bX \bw}$ is the $(Nd)$-by-$N$ block matrix with $d$-by-$1$ blocks $\partial^2 E/\partial \bx_i \partial w_j$, and
$E_{\bw \bX}$ is the $N$-by-$(Nd)$ block matrix with $1$-by-$d$ blocks $\partial^2 E/\partial w_i \partial  \bx_j$.
\end{proposition}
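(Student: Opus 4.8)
The plan is to differentiate the factored gradient formula \eqref{eq:mform} of Lemma~\ref{lem:DE} once more and to exploit the special structure of the residual at a fixed point. Write the gradient as $\nabla E = M(\bX,\bw)\,R(\bX,\bw)$, where
\[
M(\bX,\bw) = \begin{pmatrix} 2\hat{\bM} & \nabla_{\bX}\bom \\ \bO & \nabla_{\bw}\bom \end{pmatrix}, \qquad
R(\bX,\bw) = \begin{pmatrix} \bX - \bxi(\bX,\bw) \\ \bw - \bomega(\bX,\bw) \end{pmatrix}.
\]
By the product rule the Hessian is $D^2 E = M\,(DR) + (DM)[R]$, where $DR$ is the Jacobian of the residual and $(DM)[R]$ is the matrix whose entries collect the derivatives of the blocks of $M$ contracted against $R$ (with $R$ held at the point of evaluation). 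The factor $DR$ is read off directly from the definition of $R$:
\[
DR = \begin{pmatrix} \bI_{Nd} - \frac{\partial \bxi}{\partial \bX} & -\frac{\partial \bxi}{\partial \bw} \\ -\frac{\partial \bomega}{\partial \bX} & \bI_N - \frac{\partial \bomega}{\partial \bw} \end{pmatrix},
\]
whose blocks are supplied explicitly by Proposition~\ref{prop:DLloyd} but whose precise entries are not needed here. Thus the whole statement reduces to showing that $(DM)[R]$ vanishes at a fixed point.

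At a fixed point satisfying \eqref{eq:fp} the residual is $R = (\bO,-\bc)^T$ with $\bc = c(1,\ldots,1) \in \mathbb{R}^N$. I would analyse $(DM)[R]$ column-block by column-block. The upper-left block $2\hat{\bM}$ is contracted only against the $\bX$-part of $R$, which is $\bO$, so its derivatives never contribute. The surviving terms come from differentiating the last block column $(\nabla_{\bX}\bom;\,\nabla_{\bw}\bom)$ and contracting against $-\bc$. Here I invoke the fact, recorded in Lemma~\ref{lem:Dm} and the Remark that follows it, that the identities $\nabla_{\bX}\bom\,\bc = \bO$ and $\nabla_{\bw}\bom\,\bc = \bO$ hold for \emph{all} generators $(\bX,\bw)$, not merely at the fixed point: they express conservation of total mass, $\sum_j m_j = \int_\Omega \rho\,d\bx$, and invariance of the diagram under a uniform shift of the weights. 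Differentiating these identities with respect to any component of $\bX$ or $\bw$, and using that $\bc$ is a constant vector, gives $[\partial_v(\nabla_{\bX}\bom)]\bc = \bO$ and $[\partial_v(\nabla_{\bw}\bom)]\bc = \bO$. Hence every contribution to $(DM)[R]$ vanishes and $(DM)[R] = \bO$.

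Combining the two observations leaves $D^2 E = M\,(DR)$ at the fixed point, which is exactly the claimed factorization. The only real subtlety is the treatment of $(DM)[R]$: one must avoid differentiating $M$ and $R$ termwise and instead notice both that the surviving part of the residual is proportional to $\bc$, which lies in the common null space of $\nabla_{\bX}\bom$ and $\nabla_{\bw}\bom$, and, crucially, that this null-space membership is an identity in $(\bX,\bw)$ and therefore persists under differentiation. A useful consistency check is that the resulting product $M\,(DR)$ must come out symmetric, since it equals a Hessian; this can be verified from the symmetry of $\nabla_{\bw}\bom$ and the explicit Lloyd-map derivatives of Proposition~\ref{prop:DLloyd}.
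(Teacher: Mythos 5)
Your proof is correct and follows essentially the same route as the paper, whose entire proof is the one-line remark that the claim ``follows immediately from equation \eqref{eq:mform}'': differentiate the factored gradient $\nabla E = M\,R$ and discard the $(DM)[R]$ term at a fixed point. Your treatment is in fact more careful than the paper's, since you address the point the paper glosses over---that by \eqref{eq:fp} the residual at a fixed point is $(\bO,-\bc)$ rather than zero---and you correctly kill the extra term by observing that $\bc$ lies in the null spaces of $\nabla_{\bX}\bom$ and $\nabla_{\bw}\bom$ identically in $(\bX,\bw)$, so this relation survives differentiation.
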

\begin{proof}
This follows immediately from equation \eqref{eq:mform}.
\end{proof}


To evaluate the Hessian of $E$ at an arbitrary point, rather than just at a fixed point, requires the computation of the Hessian of $\bom$, which is
a rather painful computation that we choose not to do.



\section{Implementation}
\label{sec:implement}
The generalized Lloyd algorithm relies upon the computation of power diagrams. In this section we briefly review different methods for the calculation of the power diagram given a domain $\Omega$ and generators $\{ \bx_i,w_i \}_{i=1}^N$.

\subsection{Half-plane intersection}\label{sec:half-plane}
Recall that $F_{ij}=F_{ji}=P_i\cap P_j$ is the boundary between power cells $P_i$ and $P_j$. Assume that $P_i\cap P_j\neq\emptyset$ and take two distinct points $\bm{x}$ and $\bm{y}$ in $F_{ij}$. By the definition \eqref{eq:pd} of the cells $P_i$ and $P_j$ we have $\left|\bm{x}-\bm{x}_i\right|^2-w_i=\left|\bm{x}-\bm{x}_j\right|^2-w_j$ and $\left|\bm{y}-\bm{x}_i\right|^2-w_i=\left|\bm{y}-\bm{x}_j\right|^2-w_j$. Subtracting leaves
\[
\left(\bm{x}-\bm{y}\right)\cdot\left(\bm{x}_i-\bm{x}_j\right)=0.
\]
This establishes that boundaries between cells are planes with the normal to $F_{ij}$ parallel to $\bm{x}_i-\bm{x}_j$.
A point on the plane can be found by writing $\bm{p}=\bm{x}_i+s\left(\bm{x}_j-\bm{x}_i\right)$ and noting that $\bm{p}\in F_{ij}$ implies
\[
\left|\bm{p}-\bm{x}_i\right|^2-w_i=\left|\bm{p}-\bm{x}_j\right|^2-w_j
\]
from which we deduce
\[
s=\frac{1}{2}+\frac{w_i-w_j}{2\left|\bm{x}_i-\bm{x}_j\right|^2},\quad\bm{p}=\frac{1}{2}\left(\bm{x}_i+\bm{x}_j\right)-\frac{\left(w_j-w_i\right)}{2\left|\bm{x}_j-\bm{x}_i\right|^2}\left(\bm{x}_j-\bm{x}_i\right).
\]
If we define the \emph{half-plane}
\begin{equation}
\label{eqn:halfplane}
H_{ij}=H\left(\bm{x}_i,w_i,\bm{x}_j,w_j\right)=\{\bm{x}\;:\;\|\bm{x}-\bm{x}_i\|^2-w_i\le\|\bm{x}-\bm{x}_j\|^2-w_j\}
\end{equation}
then
\[
P_i=\bigcap_{j=1,j\neq i}^{j=N} H\left(\bm{x}_i,w_i,\bm{x}_j,w_j\right).
\]
The observation that power cells can be expressed as the intersection of half-planes, and the explicit expressions for both a point on the plane and the normal to the plane, is the basis for the \emph{half-plane} method for the computation of a power-diagram \cite{Okabe2000}. The power cell is built iteratively according to Algorithm \ref{algo:halfplane}.
\begin{algorithm}
\caption{The half-plane intersection method, \cite{Okabe2000}.}
\label{algo:halfplane}
\begin{algorithmic}
\REQUIRE{The set $\Omega$ is a convex polyhedron with $n_\Omega$ faces, and there are $N$ generators $\{\bm{x}_i,w_i\}_{i=1}^N$.}
\FOR{Generator $(\bm{x}_i,w_i)$}
\STATE{$\tilde{P}_i=\Omega$}
\FOR{Generators $(\bm{x}_j,w_j)$, $j\neq i$}
\STATE{Calculate $H_{ij}$, given by \eqref{eqn:halfplane}}
\STATE{$\tilde{P}_i\leftarrow\tilde{P_i}\cap H_{ij}$}
\ENDFOR
\STATE{$P_i\leftarrow\tilde{P}_i$}
\RETURN{Power cell $P_i$}
\ENDFOR
\RETURN{The power diagram composed of at most $N$ power cells, $\{P_i\}$}
\end{algorithmic}
\end{algorithm}
The na\"ive half-plane method sets the cell $\tilde{P}_i=\Omega$ initially, and following repeated intersections with half-planes $H_{ij}$ forms the power cell $P_i$. As discussed in \cite{Okabe2000} for Voronoi diagrams, the construction of each cell requires $N-1$ half-plane intersections and the number of operations in each intersection depends upon the number of faces of the cell $\tilde{P}_i$ (we must check whether the boundary of the new half-plane intersects with any of the faces of $\tilde{P}_i$). At worst, each half-plane intersection increases the number of faces by $1$. If initially the cell has $n_\Omega$ faces, then the total number of checks is at most $n_\Omega+\left(n_\Omega+1\right)+\ldots+(n_\Omega+(N-2))=(N-2)n_\Omega+(N-1)(N-2)/2=O(N^2)$. The intersections must be performed to create each cell so the overall time complexity of this method is \emph{at worst} $O(N^3)$ and is usually $O(N^2)$.

Once the power cells are obtained, the centroid and the mass of cell $P_i$ can be determined by quadrature, or in the special case of constant $\rho$ can be calculated explicitly given the vertices of the cell (see Appendix \ref{append:poly}). These quantities are needed to evaluate the energy and to perform a step of the generalized Lloyd algorithm.

\subsection{Lifting method}\label{sec:lift}

A faster method for the computation of the power diagram is given in \cite{Aurenhammer1987}, in which the generators $\{\bm{x}_i,w_i\}_{i=1}^N$ are lifted into $\mathbb{R}^{d+1}$. Given a generator $\left(\bx,w\right)$, where $\bx$ has components $x_j$, $j=1,\ldots,d$, the lifted generator is the vector in $\mathbb{R}^{d+1}$ with components $(x_1,x_2,\ldots, x_d,z)$ where $z=|\bx|^2-w$. In the power diagram computation the \emph{lower convex hull} of the lifted generators is found, giving rise to a regular triangulation of the generators. The $j$-faces of the triangulation (for example in two dimensions the $0$-faces are the generators, the $1$-faces are the edges and the $2$-faces are the triangles) are then transformed into $(d-j)$-faces via a \emph{polar map}. The result of this is that the triangulation formed by the lower convex hull of the lifted generators is transformed into the power diagram based on the generators. The expensive step in this calculation is the calculation of the lower convex hull of a set of points in $\mathbb{R}^{d+1}$. When $d=2$ then convex hull algorithms with complexity $O(N\log N)$ can be used.

\subsection{Other implementation issues}\label{sec:otherimp}

It is worth noting that Algorithm \ref{algo:genLloyd} converges to local minima and a strategy must be adopted to find global minima. In our simulations
we start with a large number of random initial configurations, apply Algorithm \ref{algo:genLloyd} and periodically sort the results. We then continue using Algorithm \ref{algo:genLloyd} on a subset of configurations that are the lowest energy states. In this way we search for global minima, although we cannot guarantee to find them with this heuristic method.

When using constant $\rho$ the results of Appendix \ref{append:poly} allow fast computation of the integrals required. When using non-constant $\rho$ we employ quadrature: the cells are triangulated and each triangle mapped to a reference triangle on which an $N$-point (we use $N=31$) quadrature rule is applied.




\section{Illustrations and Applications}
\label{sec:illus}
In this section we implement the algorithm in two and three dimensions. We use crystallization and optimal location problems to illustrate the typical flatness and non-convexity of the energy landscape and the rate of convergence of the algorithm. We finish in \S\ref{Subsec:3D} with a more serious application, where we use the algorithm to test a conjecture about the optimality of the BCC lattice for a crystallization problem in three dimensions.

\subsection{Non-convexity and flatness of energy landscape}
\label{subsec:nonconvex}
In this section we look for critical points of the two-dimensional block copolymer energy from \S\ref{Subsubsec: block copolymer}:
\begin{equation}
\label{eqn:bc}
E \left( \{\bx_i,w_i\} \right) = \sum_{i=1}^N \left\{ \lambda  \sqrt{m_i} + \int_{P_i} |\bx-\bx_i|^2 \,d \bx \right\}
\end{equation}
where $\bx_i \in \Omega = [0,1]^2$.
This example first appeared in \cite{BournePeletierRoper}.
It is the special case of \eqref{eqn:energy} with $\rho=1$, $f(m)=\lambda \sqrt{m}$, where $\lambda > 0$ is a parameter representing the strength of the repulsion between the two phases of the block copolymer. The scaling of the energy suggests that the optimal value of $N$ scales like $\lambda^{-\frac{2}{3}}$. Figure \ref{fig:flatness} shows local minimizers of $E$ for $\lambda=0.005$.
\begin{figure}[h!]
\includegraphics[width=\textwidth]{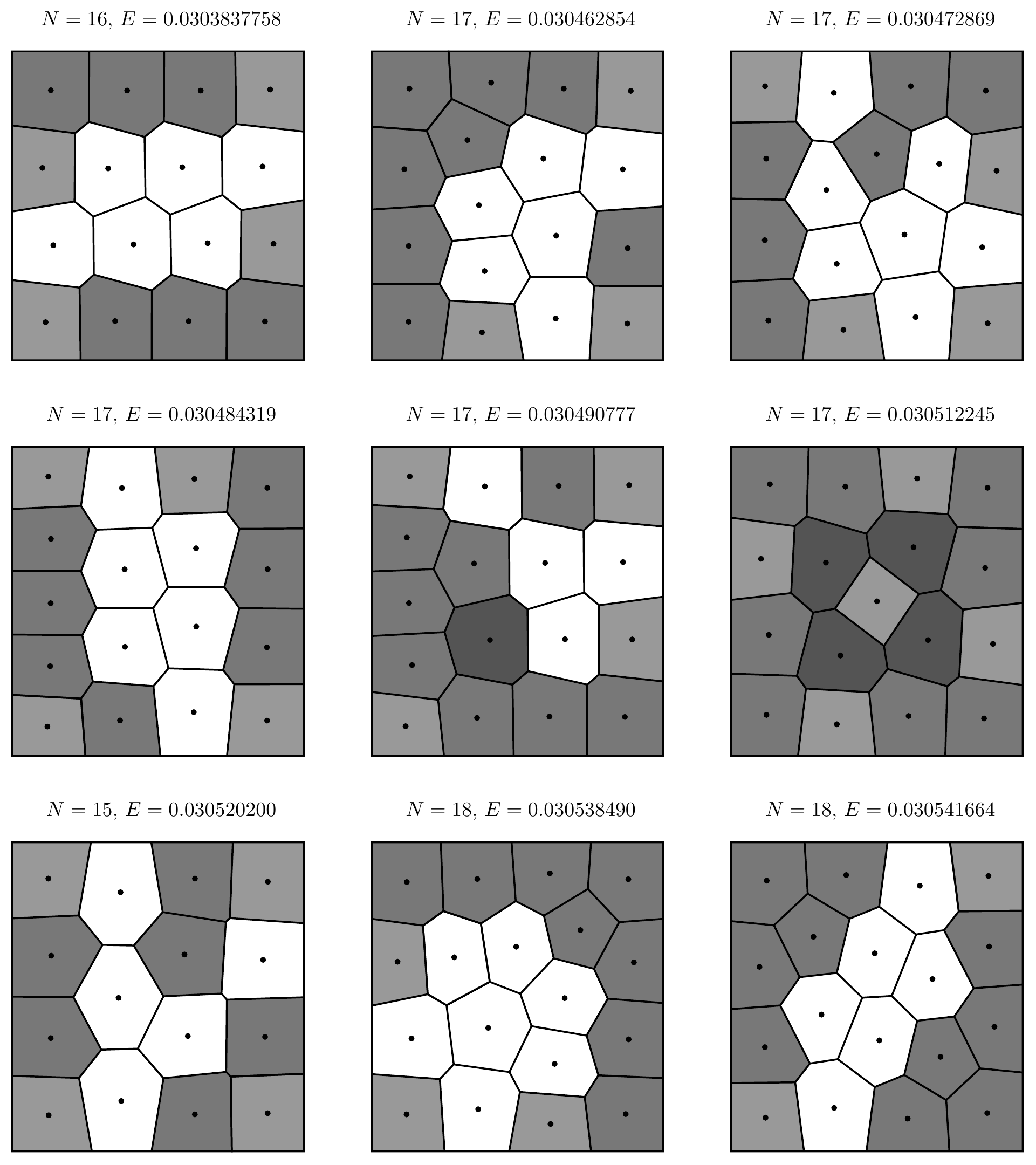}
\caption{\label{fig:flatness} Flatness of energy landscape: Some local minimizers of the energy \eqref{eqn:bc} for $\lambda=0.005$. The polygons are the power cells $P_i$ and the points are the generators $\bx_i$. The weights $w_i$ are not shown. The shading corresponds to the number of sides of the cells.}
\end{figure}
We believe that the top-left figure is a global minimizer. These were generated using $25,000$ random initial conditions to probe the non-convex energy landscape. The energy has infinitely many critical points, e.g., every centroidal Voronoi tessellation of $[0,1]^2$ with cells of equal area (such as the checkerboard configuration) is a critical point. The flatness of the energy landscape can be seen from the energy values in Figure \ref{fig:flatness}.
\begin{figure}[h!]
\includegraphics[width=0.5\textwidth]{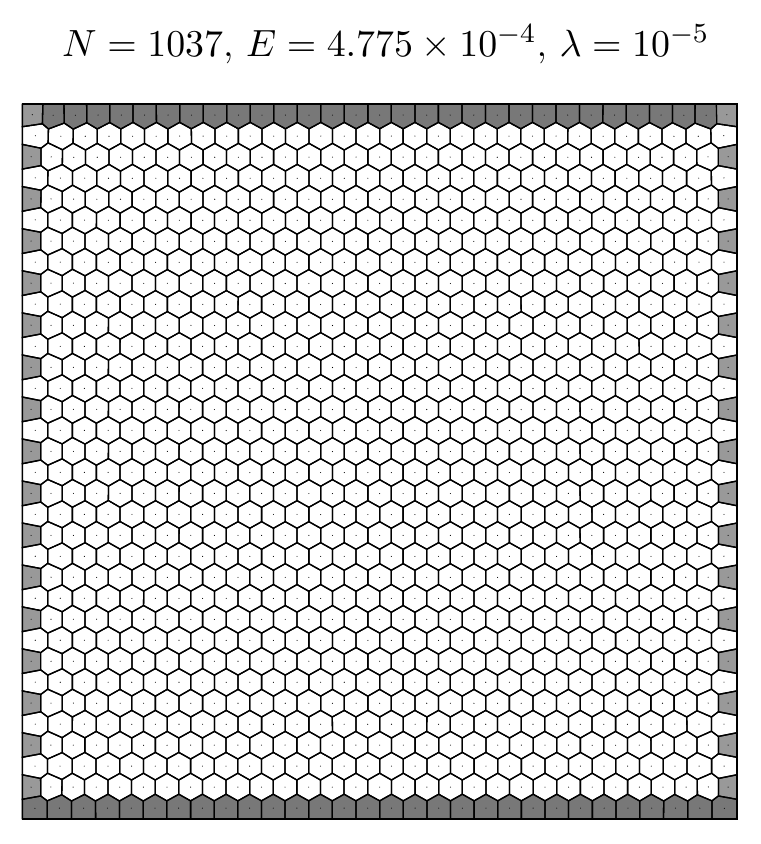}
\includegraphics[width=0.5\textwidth]{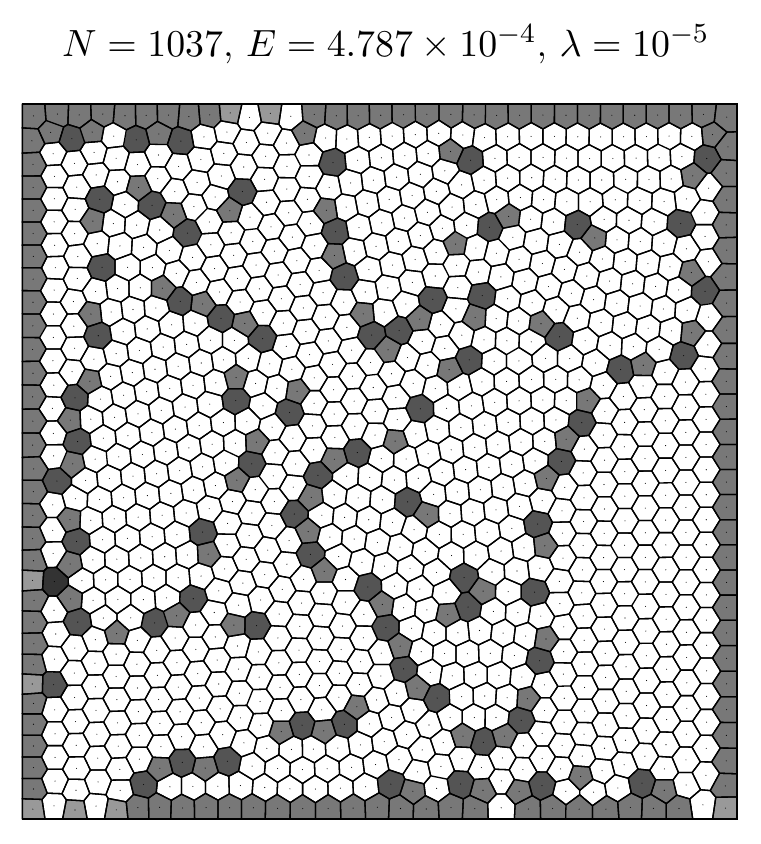}
\caption{\label{fig:flatness_large} Two local minimizers of the energy \eqref{eqn:bc} for $\lambda=10^{-5}$ with $N=1037$ in both cases. In the first case the cell generators were initially arranged in a triangular lattice, in the second case they were distributed randomly.}
\end{figure}

As $\lambda$ decreases it becomes harder to find global minimizers. Figure \ref{fig:flatness_large} shows two local minimizers for $\lambda=10^{-5}$. The figure on the left was obtained by using the triangular lattice as an initial condition. It was proved in \cite{BournePeletierTheil} that the triangular lattice is optimal in the limit $\lambda \to 0$. The figure on the right was obtained with a random initial condition. The `grains' of hexagonal tiling resemble grains in metals. This suggests that energies of the form \eqref{eqn:energy} could be used to simulate material microstructure, for example to produce Representative Volume Elements for finite element simulations \cite{Harrison}.

\subsection{Convergence rate}
In this section we study the rate of convergence of the algorithm to critical points of the energy \eqref{eqn:bc} with $\lambda=0.005$.
Figure \ref{fig:convergence}
 shows the logarithm of the approximate error of the energy plotted against the number of iterations $n$ for three simulations with random initial conditions. The initial number of generators was $N=6, 10, 25$ and there was no elimination of generators throughout the simulations. The approximate error was computed using the value of the energy at the final iteration. The graph shows that the energy converges linearly, meaning that the error at the $n$--th iteration $\varepsilon_n$ satisfies $\varepsilon_{n+1}/\varepsilon_n \to r$, where $r \in (0,1)$ is the rate of convergence. We observe that the rate of convergence decreases as the number of generators increases and that $r \sim 1- \frac{C}{N}$ for some constant $C$.
 In \cite{Du2006} it was found that for the classical Lloyd algorithm with $\rho=1$ in one dimension the rate of convergence of the generators (rather than the energy) is approximately $1-1/(4\pi^2N^2)$. This was found from the spectrum of the derivative of the Lloyd map. In principle the rate of convergence of the generalized Lloyd algorithm could be found using the derivatives given in Proposition \ref{prop:DLloyd}.
We believe that region ($\star$) in the figure is the result of the Lloyd iterates passing close to a saddle point of the energy on the way to a local minimum.
\begin{figure}[h!]
\includegraphics{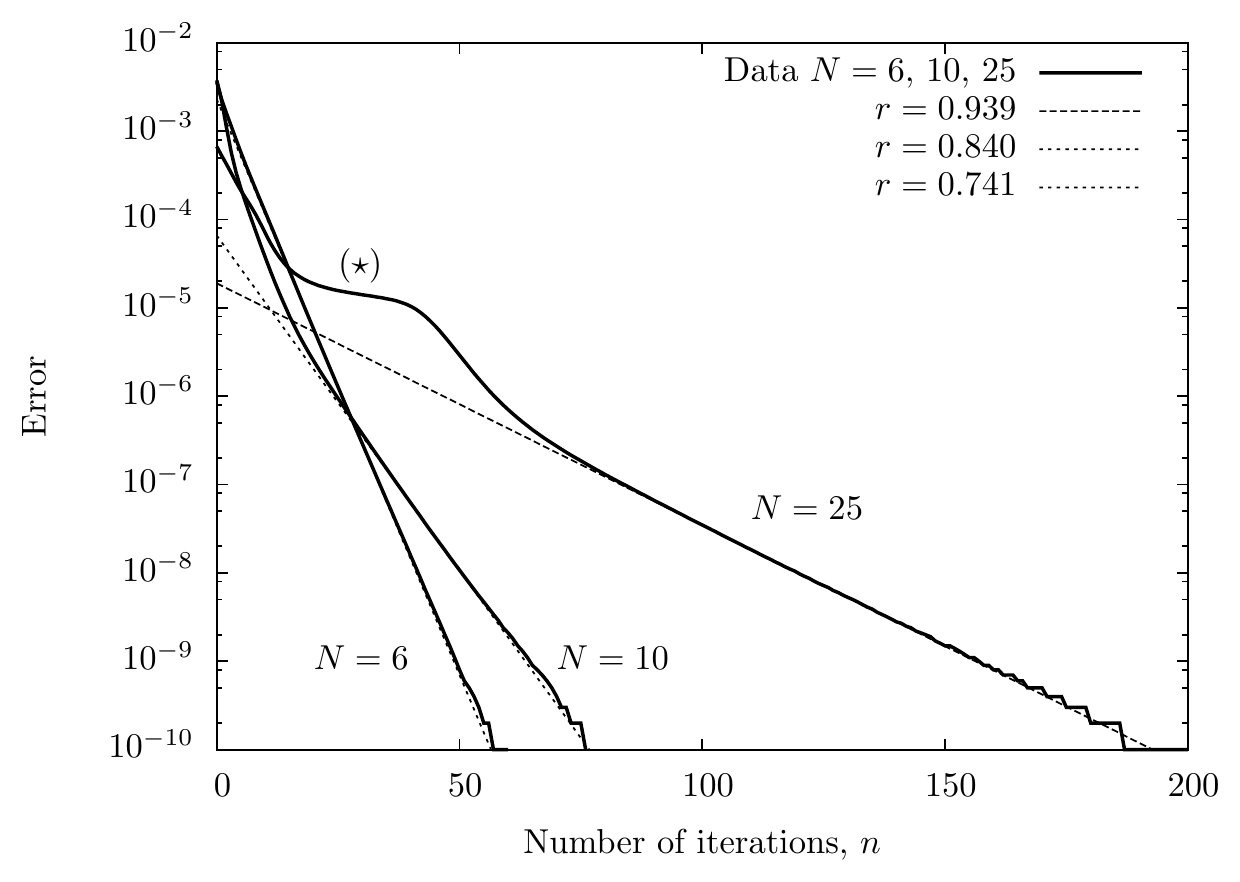}
\caption{\label{fig:convergence} Rate of convergence of the generalized Lloyd algorithm to critical points of the energy \eqref{eqn:bc} with $\lambda=0.005$: Approximate error of the energy against the number of iterations on semi-log axes for three simulations with random initial conditions. The initial number of generators was $N=6,10,25$ and there was no elimination of generators throughout the simulations. We see that the algorithm converges linearly. The rate $r$ was computed by fitting straight lines to the data.}
\end{figure}

\subsection{An optimal location problem with non-constant $\rho$}
\label{Subsec: Non-const}

In the block copolymer example in the previous sections we had $\rho=1$. In an optimal location problem $\rho$ need not be uniform and might represent population density. The term $f(m)$ represents the cost of building or running a facility to serve $m$ individuals. The function $f$ is concave, which represents an economy of scale.

A particular case of interest would be to determine where to locate government agencies (stations) to which people must attend at some rate (for example, a trip to the passport office). Somewhat artificially we may propose that the cost per person of a trip of length $l$ is $\tilde{c} \,c(l/L)$ where $L$ is a representative distance, $\tilde{c}$ is a constant with units of cost per person, and $c$ is a non-dimensional cost function. Let $\{P_i\}_{i=1}^N$ be a power diagram with generators $\{ \bx_i,w_i\}$. We assume that if a person belongs to power cell $P_i$, then they must use the station located at $\bx_i$, and that they visit the station $\omega$ times per year. Then the cost per year $C$ to the people travelling to the locations $\{\bx_i\}_{i=1}^N$ is
$$C= \tilde{c} \omega \sum_{i=1}^N  \int_{P_i} c\left(\frac{|\bx-\bx_i|}{L}\right)\rho\left(\bx\right)\,d\bx.$$
Suppose that the cost per year of running a station that serves $m$ individuals is $s f(m/M)$ where $M$ is a characteristic number of people, $s$ has units of cost per year, and $f$ is a non-dimensional cost function. Using $c(x)=x^2$ we obtain
$$\sum_{i=1}^N \left\{ sf\left(\frac{m_i}{M}\right)+\frac{\tilde{c}\omega}{L^2}\int_{P_i}|\bx-\bx_i|^2\rho\left(\bx\right)\,d\bx \right\}$$
which represents the combined cost per year of running the stations and the travel costs of the users. This cost must be minimised. By rescaling we obtain the energy
\[
E(\{ \bx_i, w_i \}) = \sum_{i=1}^N \left\{ \lambda f(m) + \int_{P_i} |\bx-\bx_i|^2\rho\left(\bx\right)\,d\bx \right\}.
\]
The parameter $\lambda$ is a measure of the cost of running a station compared to the cost incurred by the individuals using the station; small values of $\lambda$ represent a station that is low in cost to run and large values of $\lambda$ represent a station that is high in cost to run (perhaps because of infrequent visits by its users). As a concrete example we take European population data covering metropolitan France and ask where to cite stations for different choices of $\lambda$, using the function $f(m)=- m \log m$. Figure \ref{fig:france_example} shows the results for two different choices of $\lambda$, loosely corresponding to departments/regions and their centres of administration and extents.
\begin{figure}[h!]
\begin{center}
\setlength\fboxsep{0pt}
\setlength\fboxrule{0.5pt}
\fbox{\includegraphics[width=0.75\textwidth]{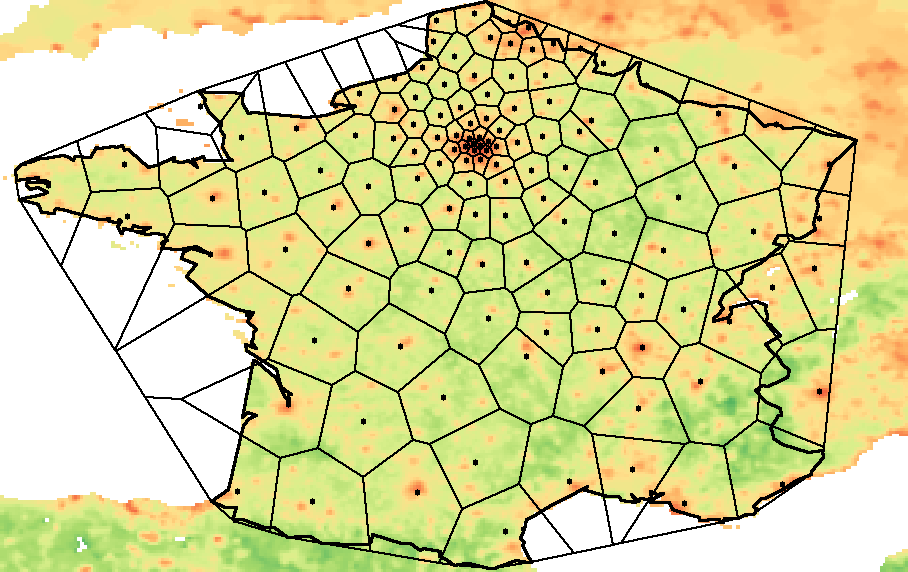}}
\fbox{\includegraphics[width=0.75\textwidth]{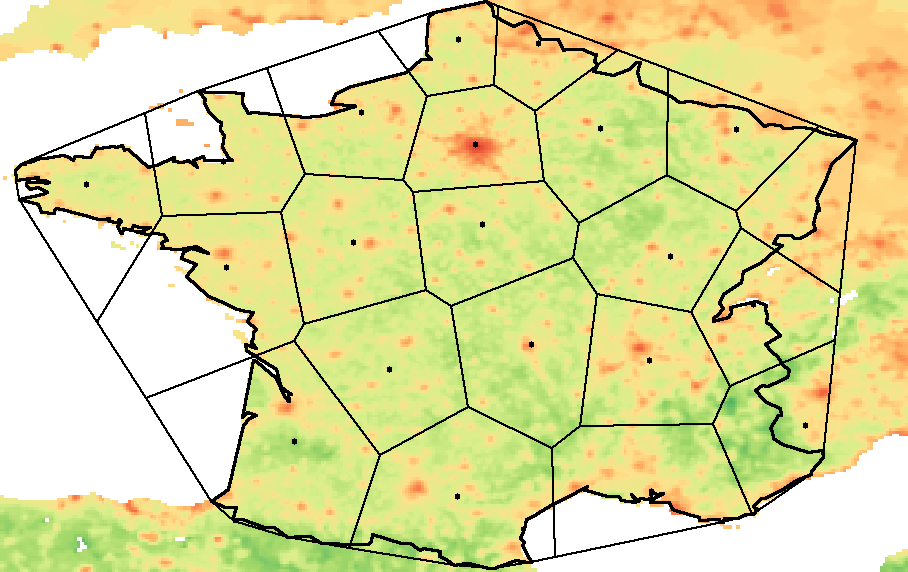}}
\caption{\label{fig:france_example} An example illustrating the algorithm applied to the convex hull of metropolitan France (recall that we require the region $\Omega$ to be convex). The colour represents the population density $\rho$, red is high density and green is low density (population data obtained from Eurostat). Areas in which no data was available have been assigned a population density of zero, for example in the seas and oceans. The figures were produced using Algorithm \ref{algo:genLloyd} with an initial condition of 5000 random generators and 9000 iterations. The top figure has $\lambda=0.01$ and the bottom figure has $\lambda=20$. In the particular instances here, the final local minimum of the energy has 128 cells when $\lambda=0.1$ and 21 cells when $\lambda=20$.}
\end{center}
\end{figure}

\subsection{An example in three dimensions: crystallization}
\label{Subsec:3D}
In this section we implement the generalized Lloyd algorithm in three dimensions for the block copolymer model from \S\ref{Subsubsec: block copolymer}:
\begin{equation}
\label{eqn:bc3D}
E \left( \{\bx_i,w_i\} \right) = \sum_{i=1}^N \left\{ \lambda  m_i^{\frac{2}{3}} + \int_{P_i} |\bx-\bx_i|^2 \,d \bx \right\}
\end{equation}
where $\bx_i \in \Omega \subset \mathbb{R}^3$. It was conjectured in \cite{BournePeletierRoper} that global minimizers of $E$ tend to a body-centred cubic (BCC) lattice as $\lambda \to 0$, meaning that the set $\{ \bx_i \}$ tends to a BCC lattice and $w_i \to 0$ for all $i$. This conjecture was motivated by block copolymer experiments and by results for the special case $\lambda=0$, $N \to \infty$: in \cite{Barnes1983} it was proved that the BCC lattice has asymptotically the lowest energy amongst all lattices and \cite{Du2005} provided numerical evidence that it has asymptotically the lowest energy amongst all possible configurations $\{ \bx_i \}$. Simulations in \cite[Sec.~4.1]{BournePeletierRoper} in two dimensions demonstrate that global minimizers of $E$ for $\lambda>0$ (centroidal power diagrams) are close to global minimizers of $E$ for $\lambda=0$ (centroidal Voronoi tessellations). In this section we give further numerical support for the conjecture.

It is computationally expensive to study the limit $\lambda \to 0$ in three dimensions since the optimal number of generators grows like $N \sim \lambda^{-1}$.
 Instead we restrict our attention to the case where $\Omega$ is a periodic cube. Figure \ref{fig:bcccell} shows a representative Voronoi cell generated by the BCC lattice, which is a truncated octahedron (Kelvin proposed a deformed version of the truncated octahedron as a candidate for three-dimensional foams).
 If $N$ is chosen appropriately, then $N$ of these cells fit exactly into the periodic cube and there is no boundary layer. If $\{\bz_i\}_{i=1}^N$ are the centres of these cells, then $\{ \bz_i, 0 \}_{i=1}^N$ is a critical point of $E$ (because it is a centroidal Voronoi tessellation and all cells have the same mass). We study its stability using the generalized Lloyd algorithm.


\begin{figure}[h!]
\begin{center}
\includegraphics[width=0.5\textwidth]{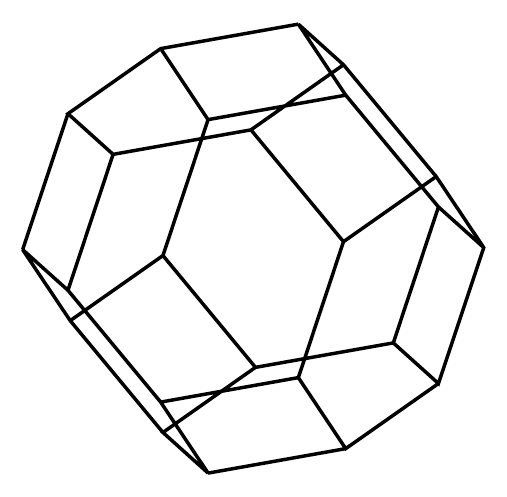}
\end{center}
\caption{\label{fig:bcccell}A Voronoi cell generated by the BCC lattice.}
\end{figure}

\begin{figure}[h!]
\begin{center}
\includegraphics[width=0.4\textwidth]{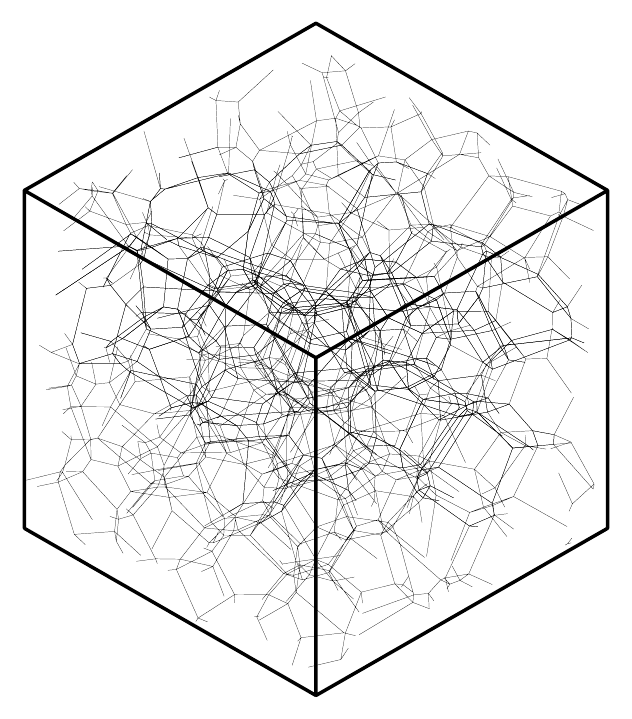}
\includegraphics[width=0.4\textwidth]{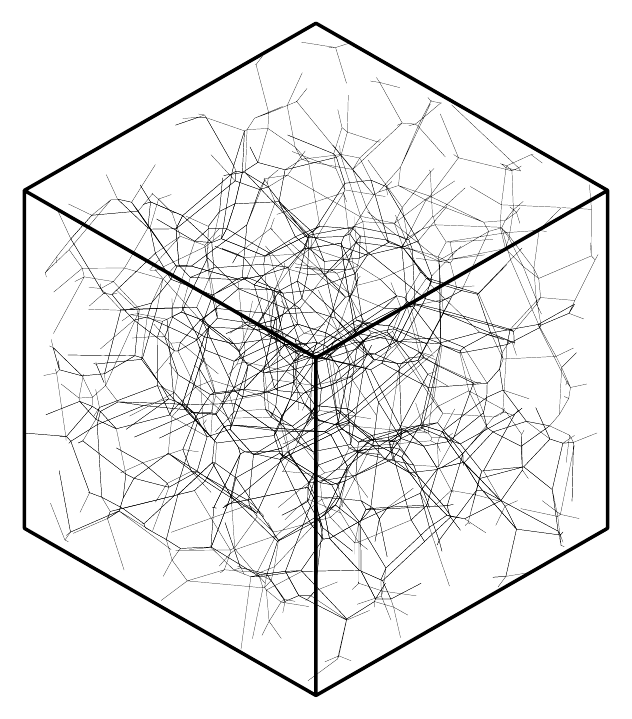}\\
\includegraphics[width=0.4\textwidth]{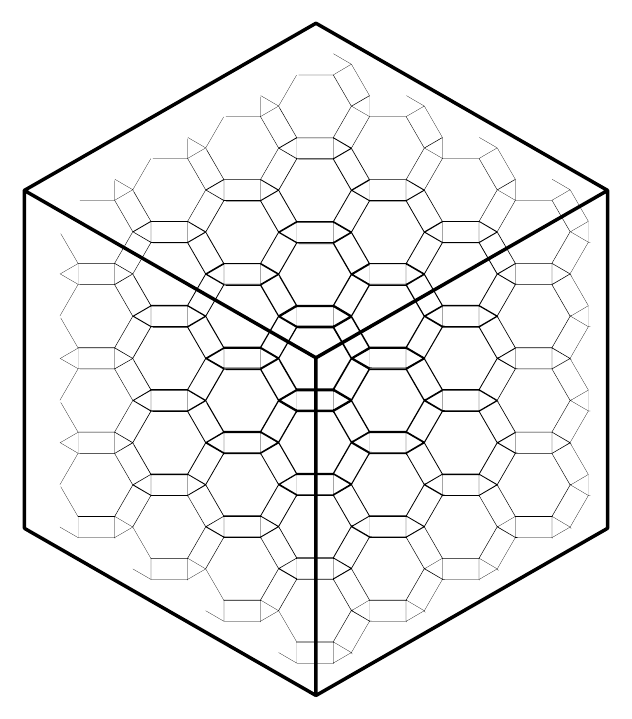}
\includegraphics[width=0.4\textwidth]{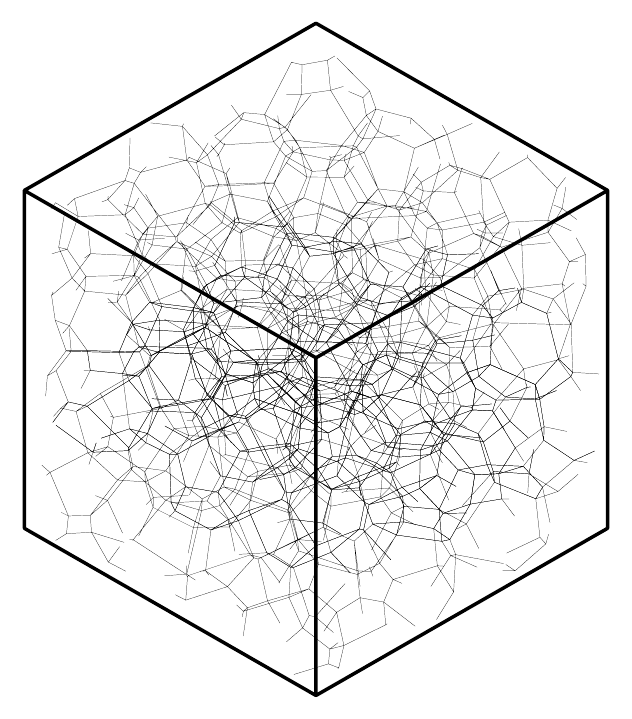}
\end{center}
\caption{\label{fig:bcc_perturb}
Numerical evidence that the BCC lattice is a local minimizer of \eqref{eqn:bc3D} for $\lambda=10^{-3}$.
Left column: the initial condition (top-left) is a perturbation of the BCC lattice (a perturbation of both the generator locations and weights). The perturbation is small enough that the algorithm converges to the BCC lattice (bottom-left, configuration after $2000$ iterations). Right column: the initial condition (top-right) is a large enough perturbation of the BCC lattice to cause the algorithm to converge to a different local minimum (bottom-right, configuration after $2000$ iterations). In all figures $N=128$.}
\end{figure}

We implemented the algorithm in C++ using the Voro++ software library to compute power diagrams \cite{Rycroft2009}. We found that the BCC lattice is stable under small perturbations; if the initial condition $ (\bX^0, \bw^0)$ is taken to be a small enough perturbation of the BCC lattice, then the generalized Lloyd algorithm converges back to the BCC lattice. See Figure \ref{fig:bcc_perturb}, left column (the initial configuration is top-left, the final configuration is bottom-left). This suggests that the BCC lattice is at least a local minimizer of the energy. Under larger perturbations the Lloyd algorithm converges to a different critical point with a higher energy. See Figure \ref{fig:bcc_perturb}, right column (the initial configuration is top-right, the final configuration is bottom-right). We also tested the energy of the BCC lattice against the energy of several common lattices and found that it was lower in each case. Due to the non-convexity and flatness of the energy landscape, however, the conjecture requires a more detailed numerical study.



\appendix

\section{Useful implementation formulas for the case $\rho=$ constant}
\label{append:poly}
In two dimensions in the special case that $\rho\left(\bx\right)=\rho_0$, a constant, the integrals defining the mass and centroid of a polygonal cell can be computed without using a quadrature rule; they can be expressed explicitly as functions of the vertices of the cell.


Consider a polygon $P$, lying in the plane with normal $\bm{k}$ with vertices $\bm{v}_k$ for $k=0,\ldots,N-1$. We use the notation $k\oplus m=(k+m)\mod N$. The vertices are numbered anti-clockwise around the boundary (with respect to the normal $\bm{k}$). We denote the outward normal to the polygon
by $\bnu$. On edge $k$ (with end points $\bm{v}_k$ and $\bm{v}_{k\oplus1}$ and length $L_k$) $\bnu$ can be written in terms of the vertices as
\begin{equation}
\nonumber
\bnu=\frac{1}{L_k}\left(\bm{v}_{k\oplus1}-\bm{v}_k\right)\times\bm{k}.
\end{equation}
As the density is constant, we can use the Divergence Theorem to express integrals over polygons as edge integrals:
\begin{equation}
\nonumber
\int_P\rho_0\,d\bx=\frac{1}{2}\rho_0\int_{\partial P} \bm{x}\cdot\bnu\,ds=\frac{1}{2}\rho_0\sum_{\text{edges $k$}}\int_{\bm{v}_k}^{\bm{v}_{k\oplus 1}}\bm{x}\cdot\bnu\,ds.
\end{equation}
Along an edge, using an arc-length parametrization,
\begin{equation}
\nonumber
\bm{x}=\frac{s}{L_k}\bm{v}_{k\oplus 1}+\frac{(L_k-s)}{L_k}\bm{v}_k, \quad
\bm{x}\cdot\bnu=\frac{1}{L_k}\left[\bm{k}\cdot\left(\bm{v}_k\times\bm{v}_{k\oplus 1}\right)\right].
\end{equation}
Since $\bm{x}\cdot\bnu$ is independent of $s$ we obtain
\begin{equation}
\int_P\rho_0\,d\bx=\frac{1}{2}\rho_0\sum_{\text{edges $k$}}\left[\bm{k}\cdot\left(\bm{v}_k\times\bm{v}_{k\oplus 1}\right)\right].
\end{equation}
Similarly
\begin{equation}
\int_P\rho_0\bm{x}\,d\bx=\frac{1}{6}\rho_0\sum_{\text{edges $k$}}\left(\bm{v}_k+\bm{v}_{k\oplus 1}\right)\left[\bm{k}\cdot\left(\bm{v}_k\times\bm{v}_{k\oplus 1}\right)\right]
\end{equation}
and
\begin{align}
&\int_P\rho_0\bx\otimes\bx\,d\bx=\\
\nonumber
&\frac{1}{24}\rho_0\sum_{\text{edges $k$}}\left[2\bm{v}_k\otimes\bm{v}_k+2\bm{v}_{k\oplus 1}\otimes\bm{v}_{k\oplus 1}+\bm{v}_k\otimes\bm{v}_{k\oplus 1}+\bm{v}_{k\oplus 1}\otimes\bm{v}_k\right]\left[\bm{k}\cdot\left(\bm{v}_k\times\bm{v}_{k\oplus 1}\right)\right].
\end{align}

%


\medskip

\paragraph{Acknowledgements} The generalized Lloyd algorithm was derived for a special case in collaboration with Mark Peletier \cite{BournePeletierRoper}. Voro++ \cite{Rycroft2009} was used to generate the power diagrams for the three-dimensional simulations in Section \ref{Subsec:3D}. All plots were prepared using Gnuplot.

\medskip

\bibliographystyle{siam}
\bibliography{dbsrpaper}

\newpage

\end{document}